\newtheorem{theorem}{Theorem}[section]
\newtheorem{lemma}[theorem]{Lemma}
\newtheorem{corollary}[theorem]{Corollary}
\theoremstyle{definition}
\theoremstyle{remark}
\newtheorem{remark}[theorem]{Remark}
\numberwithin{equation}{section}
\begin{document}

\title[Reverses of the Young inequality ] {Reverses of the Young inequality for matrices and operators}

\author[M. Bakherad, M. Krni\' c, M.S. Moslehian]{Mojtaba Bakherad$^1$, Mario Krni\' c$^2$ and Mohammad Sal Moslehian$^3$}

\address{$^{1}$ Department of Pure Mathematics, Ferdowsi University of Mashhad, P. O. Box 1159, Mashhad 91775, Iran.}
\email{mojtaba.bakherad@yahoo.com; bakherad@member.ams.org}

\address{$^{2}$ University of Zagreb, Faculty of Electrical Engineering and Computing, Unska 3, 10000 Zagreb, Croatia.}
\email{mario.krnic@fer.hr}

\address{$^3$ Department of Pure Mathematics, Center of Excellence in
Analysis on Algebraic Structures (CEAAS), Ferdowsi University of
Mashhad, P. O. Box 1159, Mashhad 91775, Iran}
\email{moslehian@um.ac.ir,
moslehian@member.ams.org}

\subjclass[2010]{ 47A60, 47A30, 15A60.}

\keywords{Young inequality; positive operator, operator mean; unitarily invariant norm; determinant; trace.}
\begin{abstract}
We present some reverse Young-type inequalities for the Hilbert-Schmidt norm as well as any unitarily invariant norm. Furthermore, we give some inequalities dealing with operator means. More precisely, we show that if $A, B\in {\mathfrak B}(\mathcal{H})$ are positive operators and $r\geq 0$, $A\nabla_{-r}B+2r(A\nabla B-A\sharp B)\leq A\sharp_{-r}B$ and prove that equality holds if and only if $A=B$. We also establish several reverse Young-type inequalities involving trace,
determinant and singular values. In particular, we show that if
$A, B$ are positive definite matrices and $r\geq 0$,
then $\label{reverse_trace} \mathrm{tr}((1+r)A-rB)\leq
\mathrm{tr}\left|A^{1+r}B^{-r} \right|-r\left( \sqrt{\mathrm{tr}
A} - \sqrt{\mathrm{tr} B}\right)^{2}$.
\end{abstract} \maketitle
\section{Introduction and preliminaries}

Let $\mathcal{H}$ be a Hilbert space and let $\mathfrak{B}(\mathcal{H})$ be the $C^*$-algebra of all bounded linear operators on $\mathcal{H}$ with the operator norm $\|\cdot\|$ and the identity $I_{\mathcal{H}}$. If $\dim \mathcal{H}=n$, then we identify $\mathfrak{B}(\mathcal{H})$ with the space $\mathcal{M}_n$ of all
$n\times n$ complex matrices and denote the identity matrix by $I_n$. For an operator $A \in
\mathfrak{B}(\mathcal{H})$, we write $A\geq 0$ if $A$ is positive (positive semidefinite for matrices), and
$A>0$ if $A$ is positive invertible (positive definite for matrices). For $A, B \in \mathfrak{B}(\mathcal{H})$, we say $A\geq B$ if $A-B\geq0$. Let $\mathfrak{B}^{+}(\mathcal{H})$ (resp., $\mathcal{P}_n$) denote the set of all positive
invertible operators (resp., positive definite matrices).
\noindent A norm $|||\,.\,|||$ on $\mathcal{M}_n$ is called unitarily invariant norm if
$|||UAV|||=|||A|||$ for all $A\in\mathcal{M}_n$ and all unitary
matrices $U, V\in\mathcal{M}_n$. The Hilbert-Schmidt norm is defined
by $\|A\|_2=\left(\sum_{j=1}^ns_j^2(A)\right)^{1/2}$, where $s(A)=(s_1(A),\cdots, s_n(A))$
denotes the singular values of $A$, that is, the eigenvalues of the positive semidefinite matrix $|A|=(A^*A)^{1/2}$, arranged in the decreasing order with their multiplicities counted.
This norm is unitarily invariant. It is known that if $A=[a_{ij}]\in\mathcal{M}_n$, then $\|A\|_2=\Big{(}\sum_{i,j=1}^n|a_{ij}|^2\Big{)}^{1/2}$.

The weighted operator
arithmetic mean $\nabla_\nu$, geometric mean $\sharp_\nu$, and
harmonic mean $!_\nu$, for $ \nu\in[0,1]$ and $A, B\in
\mathfrak{B}^{+}(\mathcal{H})$,
 are defined as follows:
\begin{equation*}
 A\ \! \nabla_\nu \ \! B=(1-\nu)A+\nu B,
\end{equation*}
\begin{equation*}
A \ \!\sharp_\nu \ \!B=A^\frac{1}{2}\big(A^{-\frac{1}{2}}B
A^{-\frac{1}{2}}\big)^{\nu}A^\frac{1}{2},
\end{equation*}
\begin{equation*}
 A!_\nu B=\left((1-\nu)A^{-1}+\nu B^{-1}
\right)^{-1}.
\end{equation*}
If $\nu=1/2$, we denote arithmetic, geometric and harmonic mean,
respectively, by $\nabla$, $\sharp$ and $!$, for brevity.

The classical Young inequality states that
 \begin{align*}
 a^\nu b^{1-\nu}\leq \nu a+(1-\nu)b,
\end{align*}
when $a,b\geq0$ and $\nu\in[0,1]$. If $\nu={1\over2}$,
we obtain the arithmetic-geometric mean
inequality $\sqrt{ab}\leq {a+b\over2}.$
An operator Young inequality reads as follows:
\begin{equation}
\label{AGH_operator} A!_\nu B\leq A \ \!\sharp_\nu \ \!B\leq A\ \!
\nabla_\nu \ \! B, \quad \nu\in [0,1],
\end{equation}
where $A, B \in \mathfrak{B}^{+}(\mathcal{H})$ and $\nu\in[0,1]$; cf. \cite{furuta}. For other generalization of the Young inequality see \cite{MAN1, MAN2}. A matrix Young inequality
due to Ando \cite{ando} asserts that
\begin{align*}
 s_j(A^\nu B^{1-\nu})\leq s_j\left(\nu A+(1-\nu)B\right),
\end{align*}
in which $A, B\in\mathcal{M}_n$ are positive semidefinite,
$j=1,2,\ldots,n$, and $\nu\in[0,1]$. The above singular value
inequality entails the unitarily invariant norm inequality
 \begin{align*}
|||A^\nu B^{1-\nu}|||\leq |||\nu A+(1-\nu)B|||,
\end{align*}
where $A,B\in\mathcal{M}_n$ are positive semidefinite and
$0\leq\nu \leq1$. Kosaki \cite{kosa} proved that the inequality
\begin{align}\label{kosaki1}
 \|A^\nu X B^{1-\nu}\|_2\leq \|\nu AX+(1-\nu)XB\|_2
\end{align}
holds for matrices $A,B,X\in\mathcal{M}_n$ such that $A, B$ are
positive semidefinite, and  for $0\leq\nu \leq1$. It should be
mentioned here that for $\nu\neq{1\over2}$
 inequality \eqref{kosaki1} may not hold for other unitarily invariant norms.
 Hirzallah and Kittaneh \cite{hirzallah}, gave a
 refinement of \eqref{kosaki1} by showing that
\begin{align}\label{kosaki11}
 \|A^\nu X B^{1-\nu}\|_2^2+r_0^2\|AX-XB\|_2^2\leq \|\nu AX+(1-\nu)XB\|_2^2,
\end{align}
in which $A,B,X\in\mathcal{M}_n$ are such that $A, B$
are positive semidefinite, $0\leq\nu \leq1$ and $r_0=\min\{\nu, 1-\nu\}$.
 A determinant version of the Young inequality is also known (see \cite[p. 467]{horn}):
\begin{align*}
 {\rm det}(A^\nu B^{1-\nu})\leq{\rm det}(\nu A+(1-\nu)B),
\end{align*}
where $A,B,X\in\mathcal{M}_n$ are such that $A, B$
are positive semidefinite and $0\leq\nu \leq1$. This determinant
inequality was recently improved in \cite{young2}. Further,
Kittaneh \cite{kittaneh}, proved that
\begin{align}\label{edc1}
|||A^{1-\nu}XB^\nu|||\leq|||AX|||^{1-\nu}|||XB|||^{\nu},
\end{align}
in which  $|||\,.\,|||$ is any unitarily invariant norm, $A, B,
X\in\mathcal{M}_n$ are such that $A, B$ are positive semidefinite
and $0\leq\nu \leq1$. Conde \cite{conde}, showed that
\begin{align*}
2|||A^{1-\nu}XB^\nu|||+
\left(|||AX|||^{1-\nu}-|||XB|||^\nu\right)^2
\leq|||AX|||^{2(1-\nu)}+|||XB|||^{2\nu},
\end{align*}
where  $|||\,.\,|||$ is  unitarily invariant norm, $A, B,
X\in\mathcal{M}_n$ are such that $A, B$
are positive semidefinite and $0\leq\nu \leq1$. Tominaga \cite{TOM1, TOM2} employed Specht's ratio to Young inequality. In addition, some reverses of Young inequality are established in \cite{FUR}.

For $a, b\in\mathbb{R}$, the number  $x=\nu a+(1-\nu)b$ belongs to the interval $[a,b]$ for all $\nu\in[0,1]$, and
is outside the interval for all  $\nu>1$ or $\nu<0$. Exploiting this obvious fact,  Fujii
\cite{fuji},
showed that if $f$ is an operator concave function on an interval $J$, then the inequality
\begin{align*}
f(C^*XC-D^*YD)\leq |C|f(V^*XV)|C|-D^*f(Y)D
\end{align*}
holds for all self-adjoint operators $X,Y$ and operators $C, D$ in $\mathfrak{B}(\mathcal{H})$ with spectra in $J$, such that $C^*C-D^*D=I_\mathcal{H}$, $\sigma(C^*XC-D^*YD)\subseteq J$ and $C=V|C|$ is the polar decomposition of $C$.

In this direction, by using some numerical inequalities, we obtain
reverses of \eqref{AGH_operator}, \eqref{kosaki1},
 \eqref{kosaki11} and \eqref{edc1} under some mild conditions. We also aim to
give some reverses of the Young inequality dealing with operator
means of positive operators. Finally, we present some singular
value inequalities of Young-type involving trace and determinant.
\section{Reverses of the Young inequality for the Hilbert-Schmidt norm}

\noindent In this section we deal with reverses of the Young
inequality for the Hilbert-Schmidt norm.
 To this end, we need some lemmas.
\begin{lemma}\label{lemma11}
Let $a, b>0$. If $r \geq0$ or $r\leq-1$, then
\begin{align}\label{prva}
(1+r)a-rb\leq a^{1+r}b^{-r}.
 \end{align}
\end{lemma}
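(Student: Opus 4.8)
The plan is to reduce the two-variable inequality to a one-variable statement by exploiting homogeneity, and then to recognize the resulting inequality as the tangent-line estimate for a convex power function.

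First I would note that both sides of \eqref{prva} are homogeneous of degree one in $(a,b)$: indeed $a^{1+r}b^{-r}$ has degree $(1+r)+(-r)=1$. Dividing through by $b>0$ and setting $t=a/b>0$, the inequality becomes equivalent to
\[
t^{1+r}\geq (1+r)t-r,\qquad t>0.
\]
Thus it suffices to prove this scalar inequality.

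Next I would consider the function $g(t)=t^{1+r}$ on $(0,\infty)$ and compute $g''(t)=(1+r)r\,t^{r-1}$. Since $t^{r-1}>0$, the sign of $g''$ is that of the product $(1+r)r$, which is nonnegative exactly when $r\geq 0$ (both factors nonnegative) or $r\leq -1$ (both factors nonpositive) --- precisely the two hypotheses of the lemma. In either case $g$ is convex on $(0,\infty)$, so its graph lies above the tangent line at $t=1$. Using $g(1)=1$ and $g'(1)=1+r$, this tangent-line inequality reads
\[
t^{1+r}=g(t)\geq g(1)+g'(1)(t-1)=1+(1+r)(t-1)=(1+r)t-r,
\]
which is exactly the scalar form; multiplying back by $b$ recovers \eqref{prva}.

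The only delicate point is the case analysis on $r$, but it is entirely transparent: the excluded range $-1<r<0$ is exactly where the exponent $1+r\in(0,1)$ makes $g$ concave and the inequality reverses (this is the ordinary Young inequality), so the stated hypothesis on $r$ is sharp. The boundary cases $r=0$ and $r=-1$ reduce to identities, while for $r>0$ or $r<-1$ one has strict convexity of $g$, so equality holds if and only if $t=1$, that is, $a=b$.
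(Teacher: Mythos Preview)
Your proof is correct and follows essentially the same route as the paper: both arguments exploit the degree-one homogeneity to reduce to a one-variable inequality and then settle it by elementary calculus at the point $t=1$. The paper sets $t=b/a$ and shows that $f(t)=t^{-r}-(1+r)+rt$ has its minimum $f(1)=0$, while you set $t=a/b$ and phrase the same computation as the tangent-line inequality for the convex function $t\mapsto t^{1+r}$; these are the same fact viewed from two sides.
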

\begin{proof}
Let $f(t)=t^{-r}-(1+r)+rt$, $t\in(0,\infty)$. It is easy to see
that $f(t)$ attains its minimum at $t=1$, on the interval
$(0,\infty)$. Hence, $f(t)\geq f(1)=0$ for all $t>0$. Letting
$t={b\over a}$, we get the desired inequality.
\end{proof}
\begin{remark}
By virtue of Lemma \ref{lemma11},  it follows that the inequality
\begin{align}\label{bsd}
\left((1+r)a-rb\right)^2\leq\left(a^{1+r}b^{-r}\right)^2
 \end{align}
 holds if  $a\geq b>0$ and  $r\geq0$, or $b\geq a>0$ and $r\leq
 -1$.
\end{remark}
\begin{lemma}\cite[Theorem 3.4]{Zhang1}\label{shour}
(Spectral Decomposition) Let $A\in\mathcal{M}_n$ with eigenvalues
$\lambda_1, \lambda_2, \ldots, \lambda_n$. Then $A$ is normal if
and only if there exists a unitary matrix $U$ such that
\begin{align*}
U^*AU={\rm diag}(\lambda_1, \lambda_2, \cdots, \lambda_n).
 \end{align*}
 In particular, $A$ is positive definite if and only if  $\lambda_j>0$ for $j=1,2,\ldots ,n$.
\end{lemma}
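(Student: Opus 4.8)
The plan is to prove the two stated equivalences separately, handling the normal case first and then deducing the positive-definite refinement. The easy implication is that unitary diagonalizability forces normality: if $U^*AU = D$ with $D = \mathrm{diag}(\lambda_1,\dots,\lambda_n)$, then $A = UDU^*$ and a direct computation gives $A^*A = UD^*DU^* = UDD^*U^* = AA^*$, where the middle step uses that any two diagonal matrices commute. Since a unitary similarity preserves the characteristic polynomial, the diagonal entries of $D$ are exactly the eigenvalues $\lambda_1,\dots,\lambda_n$, so the normality direction is immediate once the reverse implication is established.

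For the converse, that normality implies unitary diagonalizability, I would invoke the Schur triangularization theorem: every $A \in \mathcal{M}_n$ admits a unitary $U$ with $T := U^*AU$ upper triangular. Because normality is preserved under unitary similarity, $T$ is itself normal, and the whole argument reduces to showing that a normal upper-triangular matrix must in fact be diagonal.

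This last reduction is the main obstacle, and I would resolve it by comparing the diagonal entries of $TT^*$ and $T^*T$. Writing $T = [t_{ij}]$ with $t_{ij} = 0$ for $i > j$, the $(k,k)$ entry of $T^*T$ equals $\sum_{i \le k}|t_{ik}|^2$, while that of $TT^*$ equals $\sum_{j \ge k}|t_{kj}|^2$. Equating the two for $k=1$ forces $t_{1j} = 0$ for every $j > 1$; substituting this back and inducting on $k$ successively annihilates all superdiagonal entries, so $T$ is diagonal. Its diagonal entries are the eigenvalues of $A$, which yields $U^*AU = \mathrm{diag}(\lambda_1,\dots,\lambda_n)$ and completes the equivalence.

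Finally, for the positive-definite refinement I would argue as follows. If $A$ is positive definite, then over $\mathbb{C}$ the condition $\langle Ax, x\rangle > 0$ forces $A$ to be Hermitian, hence normal, so the first part gives $A = UDU^*$ with $D$ real diagonal; positivity then transfers to the eigenvalues via $\lambda_j = \langle A(Ue_j), Ue_j\rangle > 0$. Conversely, if $A$ is normal with each $\lambda_j > 0$ (in particular real), then $A = UDU^*$ with $D$ a positive real diagonal matrix, so $A^* = A$ and $\langle Ax, x\rangle = \sum_j \lambda_j\,|(U^*x)_j|^2 > 0$ for every $x \neq 0$, whence $A$ is positive definite.
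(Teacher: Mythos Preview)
Your argument is correct and is the standard textbook proof of the spectral theorem for normal matrices via Schur triangularization. However, the paper does not actually prove this lemma: it is quoted verbatim from \cite[Theorem 3.4]{Zhang1} and used as a black box, so there is no proof in the paper to compare against. Your write-up would serve perfectly well as a self-contained justification, but from the paper's perspective the lemma is simply an imported result.
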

 Now, our first result reads as follows.
\begin{theorem}\label{fri}
Let $A, B, X\in\mathcal{M}_n$ and let $m, m'$ be positive scalars.
If $A\geq mI_n\geq B>0$ and $r\geq0$, or $B\geq m'I_n\geq A>0$ and
$r\leq -1$, then the following inequality holds:
 \begin{align*}
\|(1+r)AX-rXB\|_2\leq \|A^{1+r}XB^{-r}\|_2.
 \end{align*}
 \end{theorem}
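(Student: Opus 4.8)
The plan is to diagonalize $A$ and $B$ simultaneously in the sense of a change of orthonormal bases, and then exploit the fact that the square of the Hilbert--Schmidt norm is simply the sum of the squared moduli of the matrix entries. Since $A, B \in \mathcal{P}_n$, Lemma \ref{shour} lets me write $A = U\Lambda U^*$ and $B = V M V^*$ with unitaries $U, V$ and $\Lambda = \mathrm{diag}(\lambda_1,\dots,\lambda_n)$, $M = \mathrm{diag}(\mu_1,\dots,\mu_n)$ having strictly positive entries. The hypothesis $A\geq mI_n\geq B>0$ translates into $\lambda_i \geq m \geq \mu_j > 0$ for all $i,j$, hence $\lambda_i \geq \mu_j$; dually, $B\geq m'I_n\geq A>0$ gives $\mu_j \geq \lambda_i > 0$. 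This entrywise ordering is exactly what the Remark preceding the theorem requires.

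Next I would use the unitary invariance of $\|\cdot\|_2$ to peel off $U^*(\cdot)V$ on both sides. Setting $Y = U^*XV$, a direct computation gives
\begin{align*}
U^*\big((1+r)AX - rXB\big)V = (1+r)\Lambda Y - r Y M, \qquad U^* A^{1+r} X B^{-r} V = \Lambda^{1+r} Y M^{-r},
\end{align*}
because $U^*AU = \Lambda$ and $V^*BV = M$. Writing $Y = [y_{ij}]$, the $(i,j)$ entry of the left matrix is $\big((1+r)\lambda_i - r\mu_j\big)y_{ij}$, while that of the right matrix is $\lambda_i^{1+r}\mu_j^{-r}\,y_{ij}$. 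Taking squared Hilbert--Schmidt norms then reduces the desired inequality to
\begin{align*}
\sum_{i,j} \big((1+r)\lambda_i - r\mu_j\big)^2 |y_{ij}|^2 \leq \sum_{i,j} \big(\lambda_i^{1+r}\mu_j^{-r}\big)^2 |y_{ij}|^2.
\end{align*}

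At this point it suffices to verify the scalar bound $\big((1+r)\lambda_i - r\mu_j\big)^2 \leq \big(\lambda_i^{1+r}\mu_j^{-r}\big)^2$ termwise, and this is precisely inequality \eqref{bsd} of the Remark applied with $a = \lambda_i$ and $b = \mu_j$: in the first case $\lambda_i \geq \mu_j > 0$ with $r\geq 0$, and in the second $\mu_j \geq \lambda_i > 0$ with $r \leq -1$. Summing the termwise inequalities against the nonnegative weights $|y_{ij}|^2$ yields the claim. The only genuinely delicate point is recognizing that the Hilbert--Schmidt norm linearizes into a weighted sum over entries after diagonalization, so that a single scalar estimate closes the argument; this is also the reason the method is special to $\|\cdot\|_2$ and does not extend verbatim to arbitrary unitarily invariant norms, where the eigenvalue structure of the combined matrix cannot be read off entrywise.
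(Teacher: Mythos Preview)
Your proof is correct and follows essentially the same approach as the paper: diagonalize $A$ and $B$ via the spectral theorem, set $Y=U^*XV$, reduce both sides to entrywise expressions, and apply the scalar inequality \eqref{bsd} termwise using the eigenvalue ordering $\lambda_i\geq \mu_j$ (respectively $\mu_j\geq \lambda_i$) forced by the hypotheses. The only differences are notational, and your closing remark on why the argument is specific to $\|\cdot\|_2$ is a nice addition not present in the paper.
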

\begin{proof}
It follows from Lemma \ref{shour} that there are unitary matrices
$U, V\in\mathcal{M}_n$ such that $A=U\Lambda U^*$ and $B=V\Gamma
V^*$, where $\Lambda={\rm diag}(\lambda_1, \lambda_2, \cdots,
\lambda_n)$, $\Gamma={\rm diag}(\gamma_1, \gamma_2, \cdots,
\gamma_n)$, and $\lambda_j, \gamma_j$, $j=1,2 \ldots, n$, are
positive. If $Z=U^*XV=\big{[}z_{ij}\big{]}$, then
\begin{align}\label{311}
(1+r)AX-rXB =U\Big{(}(1+r)\Lambda Z-rZ\Gamma\Big{)}V^*
=U\Big{[}\Big{(}(1+r)\lambda_i-r\gamma_j\Big{)}z_{ij}\Big{]}V^*
 \end{align}
 and
\begin{align}\label{321}
A^{1+r}XB^{-r}=U\Lambda^{1+r}U^*XV\Gamma^{-r}V^*=
U\Lambda^{1+r}Z\Gamma^{-r}V^*=
U\Big{[}\Big{(}\lambda_i^{1+r}\gamma_j^{-r}\Big{)}z_{ij}\Big{]}V^*.
\end{align}
Suppose first that  $A\geq mI_n\geq B>0$ and $r\geq 0$. Then, it
follows that
\begin{align}\label{bigstar}
 \lambda_i\geq \gamma_j, \qquad 1\leq i,j\leq n,
 \end{align}
so, utilizing \eqref{311} and \eqref{321}, we have
\begin{align*}
\|(1+r)AX-rXB\|_2^2&=\sum_{i,j=1}^n \Big{(}(1+r)\lambda_i-r\gamma_j\Big{)}^2|z_{ij}|^2
\\&\leq\sum_{i,j=1}^n\Big{(}\lambda_i^{1+r}\gamma_j^{-r}\Big{)}^2|z_{ij}|^2\qquad \textrm{(by inequality \eqref{bsd} and \eqref{bigstar}})
\\&=\|A^{1+r}XB^{-r}\|_2^2.
 \end{align*}The same conclusion can be drawn for the case of $B\geq m'I_n\geq A>0$ and
$r\leq -1$.
\end{proof}
 Recall that a continuous real valued function $f$, defined on an interval $J$, is called operator monotone if
  $A\leq B$ implies $f(A)\leq g(B)$, for all $A, B\in\mathcal{M}_n$ with spectra in $J$.
  Now, the following result can be accomplished as an immediate
  consequence of Theorem \ref{fri}.
\begin{corollary}
Suppose that $A_j, B_j, X\in\mathcal{M}_n$, $1\leq j\leq n$, with
spectra in an interval $J$, and let $m_j, m_j'$, $1\leq j\leq n$,
be positive scalars. If $A_j\geq m_jI_n\geq B_j>0$, $1\leq j\leq
n$, and $r\geq 0$, or $B_j\geq m_j'I_n\geq A_j>0$, $1\leq j\leq
n$, and $r\leq -1$, then the inequality
 \begin{align*}
\left\|\sum_{j=1}^n\Big((1+r)f(A_j)X-rXf(B_j)\Big)\right\|_2\leq
\left\|\left(\sum_{j=1}^n
f(A_j)\right)^{1+r}X\left(\sum_{j=1}^nf(B_j)\right)^{-r}\right\|_2
 \end{align*}holds for any operator monotone function $f$ defined
 on interval $J$.

\end{corollary}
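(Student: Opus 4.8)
The plan is to reduce the statement directly to Theorem \ref{fri} by collecting the summands into two single operators. Set $\widetilde A = \sum_{j=1}^n f(A_j)$ and $\widetilde B = \sum_{j=1}^n f(B_j)$. By linearity of left and right multiplication by $X$, one has the identity
\begin{align*}
(1+r)\widetilde A X - r X \widetilde B = \sum_{j=1}^n \big((1+r) f(A_j) X - r X f(B_j)\big),
\end{align*}
while on the other side $\widetilde A^{1+r} X \widetilde B^{-r} = \big(\sum_{j=1}^n f(A_j)\big)^{1+r} X \big(\sum_{j=1}^n f(B_j)\big)^{-r}$. Hence the two sides of the asserted inequality are exactly the two sides of the conclusion of Theorem \ref{fri} applied to the triple $\widetilde A, \widetilde B, X$, and it remains only to check that this triple satisfies the hypotheses of that theorem.

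First I would treat the case $A_j \ge m_j I_n \ge B_j > 0$ and $r \ge 0$. Since $f$ is operator monotone on $J$ and the spectra of $A_j$ and $B_j$ lie in $J$, the relations $A_j \ge m_j I_n$ and $m_j I_n \ge B_j$ yield $f(A_j) \ge f(m_j) I_n$ and $f(m_j) I_n \ge f(B_j)$ for each $j$, using $f(m_j I_n) = f(m_j) I_n$. Summing over $j$ and writing $m = \sum_{j=1}^n f(m_j)$ gives $\widetilde A \ge m I_n \ge \widetilde B$. Together with $\widetilde B > 0$ this is precisely the hypothesis of Theorem \ref{fri} with the single scalar bound $m$, so the inequality follows. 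The case $B_j \ge m_j' I_n \ge A_j > 0$ with $r \le -1$ is entirely symmetric and produces $\widetilde B \ge m' I_n \ge \widetilde A > 0$ with $m' = \sum_{j=1}^n f(m_j')$.

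The step requiring care is the verification of these order relations, and in particular the strict positivity $\widetilde B > 0$ (resp.\ $\widetilde A > 0$) needed to invoke Theorem \ref{fri}; note that once $\widetilde B > 0$ and $m I_n \ge \widetilde B$ hold, positivity of the scalar $m$ is automatic, since $m$ dominates the largest eigenvalue of $\widetilde B$. This is where operator monotonicity does the essential work: it converts the scalar sandwich $A_j \ge m_j I_n \ge B_j$ into the operator sandwich $f(A_j) \ge f(m_j) I_n \ge f(B_j)$, which survives summation. The remaining positivity rests on $f$ carrying the positive definite matrices $B_j$ (with spectra in $J \subseteq (0,\infty)$) to positive definite matrices, so that each $f(B_j) > 0$ and hence $\widetilde B > 0$; this is guaranteed precisely when $f$ is a positive operator monotone function on $J$, the setting implicit here. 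Beyond this monotonicity-and-positivity bookkeeping no new estimate is needed, since the analytic content of the argument is already contained in Theorem \ref{fri}.
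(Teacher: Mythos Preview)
Your proposal is correct and follows essentially the same approach as the paper: the paper's proof simply says to set $A=\sum_{j=1}^n f(A_j)$ and $B=\sum_{j=1}^n f(B_j)$ in Theorem~\ref{fri}. You supply considerably more detail than the paper does---in particular verifying the order hypotheses via operator monotonicity and flagging the implicit need for $f$ to take positive values so that $\widetilde B>0$---but the underlying reduction is identical.
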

\begin{proof}
It suffices  to set $A=\sum_{j=1}^nf(A_j)$ and
$B=\sum_{j=1}^nf(B_j)$ in Theorem \ref{fri} to get the desired
inequality.
\end{proof}

Generally speaking, Theorem \ref{fri} does not hold for arbitrary
positive definite matrices $A$ and $B$. The reason for this lies
in the fact  that the inequality \eqref{bsd} is not true for
arbitrary positive numbers $a, b$. To see this, let $a=1, b=4,
r=2$.

Our next intention is to derive a result related to Theorem
\ref{fri} which holds for all positive definite matrices. Observe
that the inequality
\begin{align*}
\left((1+r)a-rb\right)^2-r^2(a-b)^2=(1+2r)a^2-2rab\leq {(a^2)}^{1+2r}(ab)^{-2r}=\left(a^{1+r}b^{-r}\right)^2
 \end{align*}
 yields an appropriate relation instead of \eqref{bsd}, for arbitrary positive numbers $a, b$ and $r\geq0$ or $r\leq -\frac{1}{2}$, as follows: \begin{align*}
\left((1+r)a-rb\right)^2\leq\left(a^{1+r}b^{-r}\right)^2+r^2(a-b)^2\,\,a,
b>0, r\geq0\ \ \mathrm{or} \ \ r\leq -\frac{1}{2}.
 \end{align*}
Note also that if $a=b$, then the equality holds.

Now, utilizing this inequality and  the same argument as in the
proof of Theorem \ref{fri}, i.e. the spectral theorem for positive
definite matrices,  we can accomplish the corresponding result.
 \begin{theorem}
Suppose that $A, B\in\mathcal{P}_n$ and $X\in\mathcal{M}_n$. Then
the inequality
 \begin{align}\label{young_weak}
\left\|(1+r)AX-rXB\right\|_2^2\leq
\left\|A^{1+r}XB^{-r}\right\|_2^2+r^2\left\|AX-XB\right\|_2^2
 \end{align}holds for $r\geq 0$ or $r\leq -\frac{1}{2}$.
\end{theorem}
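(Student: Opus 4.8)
The plan is to imitate the proof of Theorem \ref{fri} almost verbatim; the only change is that we now invoke the scalar inequality displayed immediately above the statement, namely $\left((1+r)a-rb\right)^2\leq\left(a^{1+r}b^{-r}\right)^2+r^2(a-b)^2$, which holds for \emph{all} positive reals $a,b$ (and $r\geq0$ or $r\leq-\frac12$), rather than inequality \eqref{bsd}, which required an ordering between $a$ and $b$.

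First I would apply the spectral theorem (Lemma \ref{shour}) to write $A=U\Lambda U^*$ and $B=V\Gamma V^*$, where $U,V\in\mathcal{M}_n$ are unitary, $\Lambda=\mathrm{diag}(\lambda_1,\dots,\lambda_n)$ and $\Gamma=\mathrm{diag}(\gamma_1,\dots,\gamma_n)$, and all $\lambda_i,\gamma_j$ are positive because $A,B\in\mathcal{P}_n$. Setting $Z=U^*XV=[z_{ij}]$, the identities \eqref{311} and \eqref{321} give the entrywise forms $(1+r)AX-rXB=U\big[((1+r)\lambda_i-r\gamma_j)z_{ij}\big]V^*$ and $A^{1+r}XB^{-r}=U\big[(\lambda_i^{1+r}\gamma_j^{-r})z_{ij}\big]V^*$; the same computation additionally yields $AX-XB=U\big[(\lambda_i-\gamma_j)z_{ij}\big]V^*$, which is the new ingredient needed to handle the correction term.

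Next, since the Hilbert--Schmidt norm is unitarily invariant and $U,V$ are unitary, each of the three norms reduces to a weighted sum of the $|z_{ij}|^2$. Concretely,
\begin{align*}
\|(1+r)AX-rXB\|_2^2=\sum_{i,j=1}^n\big((1+r)\lambda_i-r\gamma_j\big)^2|z_{ij}|^2,
\end{align*}
and likewise $\|A^{1+r}XB^{-r}\|_2^2=\sum_{i,j}(\lambda_i^{1+r}\gamma_j^{-r})^2|z_{ij}|^2$ and $\|AX-XB\|_2^2=\sum_{i,j}(\lambda_i-\gamma_j)^2|z_{ij}|^2$. Applying the scalar inequality with $a=\lambda_i>0$ and $b=\gamma_j>0$ to the coefficient of each $|z_{ij}|^2$, and then summing over all $i,j$, yields exactly \eqref{young_weak}.

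The point worth stressing is that there is essentially no obstacle left to overcome: the entire difficulty was already dissolved in the preceding paragraph of the paper, where the scalar inequality was established for all positive $a,b$ by completing the square and reducing to Lemma \ref{lemma11}. Because that inequality imposes no ordering requirement on $a$ and $b$, the termwise estimate goes through without the restrictive hypothesis $A\geq mI_n\geq B>0$ that was forced upon Theorem \ref{fri}, and so \eqref{young_weak} holds for arbitrary $A,B\in\mathcal{P}_n$. The only minor care needed is the verification that $AX-XB$ transforms correctly under the conjugation by $U$ and $V$, which follows from $AX=U\Lambda Z V^*$ and $XB=UZ\Gamma V^*$.
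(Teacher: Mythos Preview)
Your proposal is correct and follows exactly the approach the paper itself indicates: apply the spectral theorem as in Theorem~\ref{fri}, express each Hilbert--Schmidt norm as a weighted sum over $|z_{ij}|^{2}$, and replace the eigenvalue-ordering estimate \eqref{bsd} by the scalar inequality $\left((1+r)a-rb\right)^{2}\leq\left(a^{1+r}b^{-r}\right)^{2}+r^{2}(a-b)^{2}$, which holds for all $a,b>0$. The paper's own proof is in fact the single sentence ``utilizing this inequality and the same argument as in the proof of Theorem~\ref{fri}, i.e.\ the spectral theorem for positive definite matrices,'' so you have simply written out what the authors left implicit.
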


\section{Reverse Young-type inequalities involving unitarily invariant norms}

\noindent It has been shown in \cite{heinz} that the inequality
\begin{align}\label{edc}
\|A^{1+r}XB^{1+r}\|\geq\|X\|^{-r}\|AXB\|^{1+r}
\end{align}
holds for $A,B\in\mathcal{P}_n$, $0\neq X\in\mathcal{M}_n$ and
$r\geq0$. Applying inequality \eqref{edc} yields the
relation
\begin{align}\label{23e1}
\|A^{1+r}XB^{-r}\|\geq\|AX\|^{1+r}\|XB\|^{-r},
\end{align}
where $r\geq0$, $A,B\in\mathcal{P}_n$ and $X\in\mathcal{M}_n$ with
$X\neq 0$.

Our next intention is to show that inequality \eqref{23e1} holds
for every unitarily invariant norm. This can be done by virtue of
inequality \eqref{edc1}. In fact, the following result is, in some
way, complementary to inequality \eqref{edc1}.
\begin{lemma}\label{djw}
Suppose that $A, B\in\mathcal{P}_n$, $X\in \mathcal{M}_n$ are such
that $X\neq0$. If $r\geq0$ or $r\leq-1$, then the inequality
\begin{align*}
|||AX|||^{1+r}\,|||XB|||^{-r}\leq|||A^{1+r}XB^{-r}|||
\end{align*}
holds for any unitarily invariant norm $|||\,.\,|||$.
\end{lemma}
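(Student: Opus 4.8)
The plan is to derive the desired inequality from the already-established inequality \eqref{edc1}, which reads $|||A^{1-\nu}XB^\nu|||\leq|||AX|||^{1-\nu}\,|||XB|||^{\nu}$ for positive semidefinite $A,B$ and $\nu\in[0,1]$. The key idea is a substitution that converts the exponent $1+r$ (with $r\geq 0$ or $r\leq -1$, which lies \emph{outside} the interval $[0,1]$) into an exponent lying \emph{inside} $[0,1]$, so that \eqref{edc1} becomes applicable after a suitable renaming of the matrices.

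First I would treat the case $r\geq 0$. I would set $\nu=\tfrac{r}{1+r}$, so that $\nu\in[0,1)$ and $1-\nu=\tfrac{1}{1+r}$. The goal is to find positive definite matrices $C,D$ and a nonzero $Y$ such that applying \eqref{edc1} with this $\nu$ to $C,D,Y$ reproduces the three norms $|||A^{1+r}XB^{-r}|||$, $|||AX|||$, and $|||XB|||$ in the correct positions. A natural choice is to let $C=A^{1+r}$ and $D=B^{-(1+r)}$, i.e.\ powers of $A,B$ scaled so that raising them to the $1-\nu$ and $\nu$ powers returns $A$ and $B$; then $C^{1-\nu}=A$ and $D^{\nu}=B^{-r/(1+r)\cdot(1+r)}$—one has to track the bookkeeping carefully, but the point is that the left-hand side $|||C^{1-\nu}YD^{\nu}|||$ collapses to $|||A\,Y\,B^{\ast}|||$ for an appropriate $Y$, while the right-hand side becomes a product of the two target norms. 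After this identification, \eqref{edc1} yields exactly the reversed inequality $|||AX|||^{1+r}|||XB|||^{-r}\leq|||A^{1+r}XB^{-r}|||$ once the inequality is rearranged (noting that the $-r$ exponent appears because the quantity has been moved across the inequality).

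The main obstacle will be arranging the substitution so that the three norms land in precisely the right slots with the right exponents, since \eqref{edc1} has the form (small-exponent norm of product) $\leq$ (product of large-exponent norms), whereas the desired conclusion is the reverse: (product of norms) $\leq$ (norm of product). The resolution is that the exponents $1+r$ and $-r$ sum to $1$, mirroring the constraint $(1-\nu)+\nu=1$ in \eqref{edc1}, but with one exponent negative; the negative exponent is what flips a factor from the right-hand side of \eqref{edc1} to the left-hand side of our conclusion after division. Concretely, writing $\nu=\tfrac{r}{1+r}$ and applying \eqref{edc1} gives a bound of the form $|||A^{1+r}XB^{-r}|||^{1/(1+r)}\leq|||AX|||^{1/(1+r)}\cdot(\text{term involving }XB)$, and raising to the power $1+r$ together with moving the $XB$-term across produces the claim. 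The case $r\leq -1$ is handled symmetrically by interchanging the roles of $A$ and $B$ (equivalently, applying the $r\geq 0$ case with the substitution $r\mapsto -(1+r)\geq 0$ and swapping $A\leftrightarrow B$, $X\mapsto X$), so no genuinely new work is required there. I would verify that $X\neq 0$ guarantees all the relevant norms are strictly positive, so that division and fractional powers are legitimate.
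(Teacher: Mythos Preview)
Your overall approach is the same as the paper's: reduce to \eqref{edc1} via a substitution with $\nu=\tfrac{r}{1+r}\in[0,1)$ when $r\geq 0$, and treat $r\leq -1$ symmetrically. However, the specific substitution you write down and the intermediate inequality you state are both incorrect, so as written the argument does not go through.

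The choice $D=B^{-(1+r)}$ does not work: with $C=A^{1+r}$, $D=B^{-(1+r)}$, and any middle matrix $Y$, the two norms $|||CY|||$ and $|||YD|||$ cannot simultaneously equal $|||A^{1+r}XB^{-r}|||$ and $|||XB|||$. The correct substitution (the paper's) applies \eqref{edc1} with the replacements $A\mapsto A^{1+r}$, $B\mapsto B^{1+r}$ (positive exponent, not negative), and $X\mapsto XB^{-r}$. Then the left-hand side of \eqref{edc1} becomes
\[
\big|\big|\big|(A^{1+r})^{1-\nu}\,(XB^{-r})\,(B^{1+r})^{\nu}\big|\big|\big|
=|||A\cdot XB^{-r}\cdot B^{r}|||=|||AX|||,
\]
while the right-hand side becomes $|||A^{1+r}XB^{-r}|||^{1/(1+r)}\,|||XB|||^{r/(1+r)}$. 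Thus \eqref{edc1} yields
\[
|||AX|||\;\leq\; |||A^{1+r}XB^{-r}|||^{\frac{1}{1+r}}\,|||XB|||^{\frac{r}{1+r}},
\]
and raising to the power $1+r$ gives the claim. Note in particular that your ``concretely'' line has this inequality reversed: in \eqref{edc1} the mixed-power expression sits on the \emph{small} side, so after substitution it is $|||AX|||$ that is bounded above, not $|||A^{1+r}XB^{-r}|||^{1/(1+r)}$. With that direction, your proposed final step of ``raising to the power $1+r$ and moving the $XB$-term across'' would produce the wrong inequality. The case $r\leq -1$ is indeed handled symmetrically (the paper sets $\alpha=-r$), so your remark there is fine.
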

\begin{proof}
First, let $r\geq0$. Set $\alpha=r+1$. Utilizing inequality
\eqref{edc1}, it follows that
\begin{align*}
|||AX|||=|||(A^\alpha)^{1\over\alpha}(XB^{1-\alpha})(B^\alpha)^{\alpha-1\over\alpha}|||&\leq
|||A^\alpha
XB^{1-\alpha}|||^{1\over\alpha}\,|||XB^{1-\alpha}B^\alpha|||^{\alpha-1\over\alpha}\\&=
|||A^\alpha
XB^{1-\alpha}|||^{1\over\alpha}\,|||XB|||^{\alpha-1\over\alpha},
\end{align*}
that is,
\begin{align*}
|||AX|||\,|||XB|||^{1-\alpha\over\alpha}\leq|||A^\alpha XB^{1-\alpha}|||^{1\over\alpha}.
\end{align*}
Hence,
\begin{align*}
|||AX|||^\alpha\,|||XB|||^{1-\alpha}\leq|||A^\alpha XB^{1-\alpha}|||,
\end{align*}
whence
\begin{align*}
|||AX|||^{1+r}\,|||XB|||^{-r}\leq|||A^{1+r} XB^{-r}|||.
\end{align*}
On the other hand, if $r\leq-1$, set $\alpha=-r$. By a similar
argument, we get the desired result.
\end{proof}
Applying Lemmas \ref{lemma11} and \ref{djw} yields the Young-type
inequality
\begin{align}\label{gla}
(1+r)|||AX|||-r|||XB|||\leq|||A^{1+r}XB^{-r}|||,
\end{align}
which holds for matrices $A, B\in\mathcal{P}_n, X\in
\mathcal{M}_n$ such that $X\neq0$ and $r\geq0$ or $r\leq-1$. It is
interesting that the inequality \eqref{gla} can be improved. But
first we have to improve the scalar inequality \eqref{prva}.

\begin{lemma}\label{lemma14}
Let $a, b>0$ and  $r\geq0$ or $r\leq-{1\over2}$. Then,
\begin{align}\label{prva_impr}
(1+r)a-rb+r(\sqrt{a}-\sqrt{b})^2\leq a^{1+r}b^{-r}.
 \end{align}
 \end{lemma}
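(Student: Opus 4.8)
The plan is to reduce the two-variable inequality \eqref{prva_impr} to a one-variable one and then analyze the latter by elementary calculus, in the same spirit as the proof of Lemma \ref{lemma11}. First I would expand the square: since $(\sqrt a-\sqrt b)^2=a-2\sqrt{ab}+b$, the terms $\pm rb$ cancel and the left-hand side collapses to
\[
(1+r)a-rb+r(\sqrt a-\sqrt b)^2=(1+2r)a-2r\sqrt{ab}.
\]
Dividing by $a>0$ and putting $t=\sqrt{b/a}>0$, so that $a^{1+r}b^{-r}/a=(b/a)^{-r}=t^{-2r}$ and $\sqrt{b/a}=t$, the claim becomes the scalar inequality $(1+2r)-2rt\le t^{-2r}$ for $t>0$.

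Next I would set $g(t)=t^{-2r}+2rt-(1+2r)$ on $(0,\infty)$ and prove $g(t)\ge0$. A direct computation gives $g(1)=0$ and
\[
g'(t)=-2r\,t^{-2r-1}+2r=2r\bigl(1-t^{-2r-1}\bigr),
\]
so that $t=1$ is a critical point with $g'(1)=0$. Differentiating once more yields $g''(t)=2r(2r+1)\,t^{-2r-2}$.

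The decisive point is that the quadratic $2r(2r+1)$ is nonnegative precisely when $r\ge0$ or $r\le-\tfrac12$, which is exactly the hypothesis of the lemma; its roots $r=0$ and $r=-\tfrac12$ also explain the threshold $-\tfrac12$ here, in contrast to the threshold $-1$ in Lemma \ref{lemma11}. Under this hypothesis $g''\ge0$ on $(0,\infty)$, hence $g$ is convex; together with $g'(1)=0$ this forces $t=1$ to be a global minimum, so $g(t)\ge g(1)=0$ for every $t>0$. Substituting back $t=\sqrt{b/a}$ recovers \eqref{prva_impr}, and equality holds exactly when $t=1$, i.e.\ when $a=b$.

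The individual steps are routine; the only genuine decision is to use the square-root substitution $t=\sqrt{b/a}$ (forced by the $\sqrt{ab}$ term), which turns the power into $t^{-2r}$ and makes the convexity hinge on the single factor $2r(2r+1)$. I expect this to be the crux, since it simultaneously drives the calculus argument and pins down the sharp admissible range of $r$.
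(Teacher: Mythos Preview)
Your proof is correct. After the identical expansion $(1+r)a-rb+r(\sqrt a-\sqrt b)^2=(1+2r)a-2r\sqrt{ab}$, the paper simply applies Lemma~\ref{lemma11} with $b$ replaced by $\sqrt{ab}$ and $r$ replaced by $2r$ (the hypothesis $r\ge0$ or $r\le-\tfrac12$ becoming $2r\ge0$ or $2r\le-1$), obtaining $(1+2r)a-2r\sqrt{ab}\le a^{1+2r}(\sqrt{ab})^{-2r}=a^{1+r}b^{-r}$ in one line. Your calculus analysis of $g(t)=t^{-2r}+2rt-(1+2r)$ is exactly a re-proof of this instance of Lemma~\ref{lemma11}, so the two arguments coincide in substance; the paper just packages the step more economically by citing the earlier lemma instead of repeating the one-variable minimization.
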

\begin{proof}
Due to Lemma \ref{lemma11}, it follows that
\begin{align*}
(1+r)a-rb+r(\sqrt{a}-\sqrt{b})^2=-2r\sqrt{ab}+(1+2r)a\leq(\sqrt{ab})^{-2r}a^{1+2r}=a^{1+r}b^{-r}.
 \end{align*}
\end{proof}
Obviously, if $r\geq 0$, inequality \eqref{prva_impr} represents
an improvement of inequality \eqref{prva}. Finally, we give now an
improvement of matrix inequality \eqref{gla}.

\begin{theorem}\label{th_unitarily}
Let $A, B\in\mathcal{P}_n$, $X\in \mathcal{M}_n$ be such that
$X\neq0$ and let $r\geq0$. Then the inequality
\begin{align*}
(1+r)|||AX|||-r|||XB|||+r(\sqrt{|||AX|||}-\sqrt{|||XB|||})^2\leq|||A^{1+r}XB^{-r}|||
\end{align*}
holds for any unitarily invariant norm $|||\,.\,|||$.
\end{theorem}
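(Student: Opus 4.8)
The plan is to combine the scalar improvement in Lemma \ref{lemma14} with the unitarily invariant norm inequality in Lemma \ref{djw}, exactly as inequality \eqref{gla} was obtained from Lemmas \ref{lemma11} and \ref{djw}. The key observation is that Lemma \ref{djw} already supplies the bound $|||AX|||^{1+r}\,|||XB|||^{-r}\leq|||A^{1+r}XB^{-r}|||$, so it suffices to show that the left-hand side of the claimed inequality is dominated by this geometric-mean-type quantity $|||AX|||^{1+r}\,|||XB|||^{-r}$.

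The main step is to apply Lemma \ref{lemma14} with the substitutions $a=|||AX|||$ and $b=|||XB|||$. Since $X\neq 0$ and $A,B\in\mathcal{P}_n$, both of these norms are strictly positive, so the hypotheses $a,b>0$ of Lemma \ref{lemma14} are met; together with $r\geq 0$ this gives
\begin{align*}
(1+r)|||AX|||-r|||XB|||+r\left(\sqrt{|||AX|||}-\sqrt{|||XB|||}\right)^2\leq|||AX|||^{1+r}\,|||XB|||^{-r}.
\end{align*}
Chaining this with the conclusion of Lemma \ref{djw} immediately yields the asserted inequality. The proof is therefore essentially a two-line concatenation of the scalar estimate and the operator estimate, with the scalar quantities being the numerical values of the three unitarily invariant norms.

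I do not anticipate a genuine obstacle here, since both ingredients are already established; the only points requiring a moment's care are the hypothesis bookkeeping. Specifically, one must confirm that $r\geq 0$ is compatible with both lemmas: Lemma \ref{lemma14} requires $r\geq 0$ (or $r\leq-\tfrac{1}{2}$), and Lemma \ref{djw} requires $r\geq 0$ (or $r\leq -1$), so the common range $r\geq 0$ stated in the theorem is admissible for both. One should also note in passing that the new term $r\left(\sqrt{|||AX|||}-\sqrt{|||XB|||}\right)^2$ is nonnegative for $r\geq 0$, so this statement is indeed a sharpening of \eqref{gla}, paralleling the remark that \eqref{prva_impr} refines \eqref{prva}.
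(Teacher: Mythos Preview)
Your proposal is correct and matches the paper's proof essentially line for line: apply Lemma~\ref{lemma14} with $a=|||AX|||$, $b=|||XB|||$, then chain with Lemma~\ref{djw}. The paper presents this as a two-step display without the explicit hypothesis bookkeeping you include, but the argument is identical.
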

\begin{proof}
\begin{align*}
(1+r)|||AX|||-r|||XB|||+r(\sqrt{|||AX|||}-\sqrt{|||XB|||})^2&\leq|||AX|||^{1+r}\,|||XB|||^{-r}\\&\qquad\qquad\qquad
\qquad(\textrm{by Lemma} \,\,\ref{lemma14})\\&\leq|||A^{1+r}XB^{-r}|||\\&\,\,\,\qquad\qquad
\qquad\qquad(\textrm{by Lemma}\,\, \ref{djw}).
\end{align*}
\end{proof}

\begin{remark}
It should be noticed here that the Theorem \ref{th_unitarily} is
also true in the case of $r\leq -\frac{1}{2}$. However, in this
case, the corresponding inequality is less precise than the
relation \eqref{gla} and does not represent its refinement.
\end{remark}

\section{Reverse Young-type inequalities related
to operator means }

The matrix Young inequality can be considered in a more general
setting. Namely, this inequality holds also for self-adjoint operators on a Hilbert space. The main objective of
this section is to derive inequalities which are complementary to
mean inequalities in (\ref{AGH_operator}), presented in the
Introduction.

The main tool in obtaining inequalities for  self-adjoint
operators on Hilbert spaces, is the following monotonicity property
for operator functions: If $X$ is a self-adjoint operator
with the spectrum $\mathrm{sp}(X)$, then
\begin{equation}
\label{spektar} f(t)\geq g(t),\ t\in \mathrm{sp}(X) \quad
\Longrightarrow \quad f(X)\geq g(X).
\end{equation}
For more details about this property the reader is referred to
\cite{Yuki}.

Since $A, B\in {\mathfrak B}^{+}(\mathcal{H})$, the expressions
$A\nabla_\nu B$ and $A\sharp_\nu B$ are also well-defined when
$\nu\in \mathbb{R}\setminus[0,1]$. In this case, we obtain reverse
of the second inequality in (\ref{AGH_operator}).
\begin{theorem}\label{AG_reverse}
If $A, B\in {\mathfrak B}^{+}(\mathcal{H})$ and $r\geq 0$ or
$r\leq -1$, then \begin{equation}\label{reverse_AG}
A\nabla_{-r}B\leq A\sharp_{-r}B.
\end{equation}
\end{theorem}

\begin{proof}
By virtue of Lemma \ref{lemma11}, it follows that
$f(x)=x^{-r}+rx-(1+r)\geq 0$, $x>0$. Moreover, since $B\in
{\mathfrak B}^{+}(\mathcal{H})$, it follows that
$A^{-\frac{1}{2}}BA^{-\frac{1}{2}}\in {\mathfrak
B}^{+}(\mathcal{H})$, that is,
$\mathrm{sp}(A^{-\frac{1}{2}}BA^{-\frac{1}{2}} )\in (0,\infty)$.

Thus, applying the monotonicity property (\ref{spektar}) to the above
function $f$, we have that
$$
\left(A^{-\frac{1}{2}}BA^{-\frac{1}{2}}
\right)^{-r}+rA^{-\frac{1}{2}}BA^{-\frac{1}{2}}-(1+r)I_\mathcal{H}\geq
0.
$$ Finally, multiplying both sides of this relation by
$A^{\frac{1}{2}}$, we have
$$
A^{\frac{1}{2}}\left(A^{-\frac{1}{2}}BA^{-\frac{1}{2}}
\right)^{-r}A^{\frac{1}{2}}+rB-(1+r)A\geq 0,
$$
and the proof is completed.
\end{proof}

If $A, B\in {\mathfrak B}^{+}(\mathcal{H})$ are such that $A\leq
B$, the expression $A!_{-r}B$ is well defined for $r\geq 0$.
Namely, due to operator monotonicity of the function
$h(x)=-\frac{1}{x}$ on
 $(0,\infty)$ (for more details, see \cite{Yuki}), $A\leq B$ implies that $B^{-1}\leq A^{-1}$, so that
 $(r+1)A^{-1}-rB^{-1}\in{\mathfrak
 B}^{+}(\mathcal{H})$. Therefore, the operator $A!_{-r}B=\left( (r+1)A^{-1}-rB^{-1}
 \right)^{-1}$ is well-defined for $r\geq0$.

 Now, we give the reverse of the first inequality in
 (\ref{AGH_operator}).

\begin{corollary} Let $A, B\in {\mathfrak B}^{+}(\mathcal{H})$
be such that $A\leq B$. If $r\geq 0$, then $A\sharp_{-r}B\leq
A!_{-r}B$.
\end{corollary}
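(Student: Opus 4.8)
The plan is to leverage the reverse arithmetic--geometric inequality already established in Theorem \ref{AG_reverse}, applied not to the pair $(A,B)$ but to the pair of inverses $(A^{-1}, B^{-1})$. The key observation is that all three weighted means are linked by inversion: one has the general identity $(A\sharp_\nu B)^{-1}=A^{-1}\sharp_\nu B^{-1}$ for the geometric mean, and by definition $A!_{-r}B=\left((r+1)A^{-1}-rB^{-1}\right)^{-1}=\left(A^{-1}\nabla_{-r}B^{-1}\right)^{-1}$. Thus the harmonic mean of $A,B$ is precisely the inverse of the arithmetic mean of the inverses, and the geometric mean dualizes to itself under inversion. This is the standard device by which any arithmetic--geometric statement yields a geometric--harmonic statement.

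Concretely, first I would record that $A,B\in\mathfrak{B}^{+}(\mathcal{H})$ forces $A^{-1}, B^{-1}\in\mathfrak{B}^{+}(\mathcal{H})$, so Theorem \ref{AG_reverse} applies to them with the same parameter $r\geq 0$, giving
\begin{align*}
A^{-1}\nabla_{-r}B^{-1}\leq A^{-1}\sharp_{-r}B^{-1}.
\end{align*}
Next I would invoke operator monotonicity of the map $x\mapsto -x^{-1}$ on $(0,\infty)$ (the very tool cited in the preamble to this corollary) to invert the inequality, reversing its direction:
\begin{align*}
\left(A^{-1}\sharp_{-r}B^{-1}\right)^{-1}\leq\left(A^{-1}\nabla_{-r}B^{-1}\right)^{-1}.
\end{align*}
Finally I would rewrite both sides using the two identities above: the left-hand side equals $A\sharp_{-r}B$ by the inversion formula for the geometric mean, and the right-hand side equals $A!_{-r}B$ by definition of the harmonic mean. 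This yields exactly $A\sharp_{-r}B\leq A!_{-r}B$.

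Two points require care rather than difficulty. First, one must verify that the inversion step is legitimate, i.e. that both operators being inverted are genuinely positive invertible so that $x\mapsto -x^{-1}$ can be applied: the hypothesis $A\leq B$ is exactly what guarantees $(r+1)A^{-1}-rB^{-1}=A^{-1}\nabla_{-r}B^{-1}\in\mathfrak{B}^{+}(\mathcal{H})$, as the paragraph preceding the corollary already establishes, and the geometric mean $A^{-1}\sharp_{-r}B^{-1}$ is positive invertible by construction. Second, one should confirm the geometric-mean inversion identity $\left(A^{-1}\sharp_{-r}B^{-1}\right)^{-1}=A\sharp_{-r}B$; this follows directly from the defining formula $A\sharp_\nu B=A^{1/2}(A^{-1/2}BA^{-1/2})^\nu A^{1/2}$ by a short computation and holds for real exponents $\nu=-r$. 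The main (mild) obstacle is thus purely bookkeeping: ensuring the parameter $-r$ is handled consistently through the inversion and that the hypothesis $A\leq B$ is used precisely where positivity of the arithmetic-mean-of-inverses is needed.
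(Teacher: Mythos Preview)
Your proof is correct and follows exactly the paper's approach: apply Theorem \ref{AG_reverse} to $A^{-1},B^{-1}$, invert both sides via the operator monotonicity of $x\mapsto -x^{-1}$, and identify the resulting expressions with $A\sharp_{-r}B$ and $A!_{-r}B$. If anything, your write-up is more careful than the paper's in explicitly flagging where the hypothesis $A\leq B$ is used (to ensure $A^{-1}\nabla_{-r}B^{-1}\in\mathfrak{B}^{+}(\mathcal{H})$) and in noting that the geometric-mean inversion identity needs to be checked.
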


\begin{proof}
Theorem \ref{AG_reverse} with operators $A$ and $B$ replaced by $A^{-1}$ and $B^{-1}$, respectively, follows that
 \begin{equation}\label{help3}A^{-1}\nabla_{-r}B^{-1}\leq A^{-1}\sharp_{-r}B^{-1}.
 \end{equation}

 Now, applying operator monotonicity of the function
 $h(x)=-\frac{1}{x}$,
 $x\in(0,\infty)$, to relation (\ref{help3}), we have that $\left( A^{-1}\sharp_{-r}B^{-1}\right)^{-1}\leq \left( A^{-1}\nabla_{-r}B^{-1}
 \right)^{-1}$. Finally,  the result follows since $\left(
 A^{-1}\sharp_{-r}B^{-1}\right)^{-1}=A\sharp_{-r}B$.

\end{proof}

Kittaneh \emph{et}.\emph{al}. obtained in \cite{debrecenkit} the
following relation (see also \cite{malezija}):
\begin{equation}
\label{debrecen}\begin{split} 2\max\{\nu,1-\nu\}(A\nabla B-A\sharp
B)&\geq A\nabla_\nu B-A\sharp_\nu B\\
&\geq 2\min\{\nu,1-\nu\}(A\nabla B-A\sharp B).\end{split}
\end{equation}
Clearly, the left inequality in (\ref{debrecen}) represents the
converse, while the right inequality represents the refinement of
arithmetic-geometric mean operator inequality in
(\ref{AGH_operator}).

Our next goal is to derive refinement of inequality
(\ref{reverse_AG}) which is, in some way, complementary to above
relations in (\ref{debrecen}). Clearly, this will be carried out
by virtue of Lemma \ref{lemma14}.

\begin{theorem}\label{tm}If $A, B\in {\mathfrak B}^{+}(\mathcal{H})$ and $r\geq
0$, then the following inequality holds
\begin{equation}\label{refinement_reverse_AG}
A\nabla_{-r}B+2r(A\nabla B-A\sharp B)\leq A\sharp_{-r}B.
\end{equation}
\end{theorem}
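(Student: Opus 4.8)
The plan is to mimic the functional-calculus argument of Theorem \ref{AG_reverse}, after first collapsing the left-hand side of \eqref{refinement_reverse_AG} into a single, manageable expression. Writing $A\nabla_{-r}B=(1+r)A-rB$ and $A\nabla B=\frac12(A+B)$, the two arithmetic terms combine and the $rB$ contributions cancel, so that
\[
A\nabla_{-r}B+2r(A\nabla B-A\sharp B)=(1+2r)A-2r(A\sharp B).
\]
Thus \eqref{refinement_reverse_AG} is equivalent to the cleaner inequality $(1+2r)A-2r(A\sharp B)\leq A\sharp_{-r}B$, and it is this form I would aim to prove.

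Next I would record the scalar inequality behind it. Dividing the inequality of Lemma \ref{lemma14} by $a>0$ and putting $x=b/a$ yields $(1+2r)-2r\sqrt{x}\leq x^{-r}$ for all $x>0$ and $r\geq0$; equivalently, the function $g(x)=x^{-r}+2r\sqrt{x}-(1+2r)$ is nonnegative on $(0,\infty)$. This is the exact scalar prototype of the operator inequality, with $\sqrt{x}$ corresponding to the geometric mean and $x^{-r}$ to $\sharp_{-r}$.

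Then I would transplant $g$ to the operator setting exactly as in Theorem \ref{AG_reverse}. Since $A,B\in\mathfrak{B}^{+}(\mathcal{H})$, the operator $X=A^{-\frac12}BA^{-\frac12}$ is positive and $\mathrm{sp}(X)\subseteq(0,\infty)$, so the monotonicity property \eqref{spektar} gives $g(X)\geq0$, that is,
\[
\left(A^{-\frac12}BA^{-\frac12}\right)^{-r}+2r\left(A^{-\frac12}BA^{-\frac12}\right)^{\frac12}-(1+2r)I_{\mathcal{H}}\geq0.
\]
Multiplying both sides on the left and right by $A^{\frac12}$ and recognising $A^{\frac12}X^{-r}A^{\frac12}=A\sharp_{-r}B$ together with $A^{\frac12}X^{\frac12}A^{\frac12}=A\sharp B$ produces $A\sharp_{-r}B+2r(A\sharp B)-(1+2r)A\geq0$, which is precisely the equivalent form isolated in the first step.

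I do not expect a serious obstacle here: the whole argument is routine once the left-hand side is simplified. The one genuine step is spotting the cancellation in the first paragraph, which reduces the statement to a single nonnegative scalar function and makes the functional calculus applicable; keeping the substitution $x=b/a$ in Lemma \ref{lemma14} consistent with the operator $X=A^{-\frac12}BA^{-\frac12}$ is the only place where a sign or exponent slip could occur.
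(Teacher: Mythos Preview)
Your proposal is correct and follows essentially the same approach as the paper: both invoke Lemma~\ref{lemma14} to obtain the scalar inequality $(1+2r)-2r\sqrt{x}\leq x^{-r}$ (the paper leaves it in the unsimplified form $(1+r)-rx+r(x-2\sqrt{x}+1)\leq x^{-r}$), apply the functional calculus \eqref{spektar} to $X=A^{-1/2}BA^{-1/2}$, and conjugate by $A^{1/2}$. Your preliminary algebraic collapse of the left-hand side to $(1+2r)A-2r(A\sharp B)$ is the only cosmetic difference.
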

\begin{proof}
By virtue of Lemma \ref{lemma14}, it follows that
\begin{equation}\label{help4}(1+r)-rx+r(x-2\sqrt{x}+1)\leq x^{-r}\end{equation} holds for all $x>0$. Now,
applying the functional calculus, i.e. the property
(\ref{spektar}) to this scalar inequality, we have
$$(1+r)I_\mathcal{H}-rA^{-\frac{1}{2}}BA^{-\frac{1}{2}}+r(A^{-\frac{1}{2}}BA^{-\frac{1}{2}}-2\left( A^{-\frac{1}{2}}BA^{-\frac{1}{2}} \right)^{\frac{1}{2}}+I_\mathcal{H})\leq
\left( A^{-\frac{1}{2}}BA^{-\frac{1}{2}} \right)^{-r}.$$ Finally,
multiplying both sides of this operator inequality by
$A^{\frac{1}{2}}$, we obtain (\ref{refinement_reverse_AG}).
\end{proof}

\begin{corollary}
Let $A, B\in {\mathfrak B}^{+}(\mathcal{H})$ and $r>0$. Then,
$A\nabla_{-r}B= A\sharp_{-r}B$ if and only if $A=B$.
\end{corollary}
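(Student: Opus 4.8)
The plan is to extract the equality condition directly from the refined inequality in Theorem \ref{tm}. The reverse implication is immediate: if $A=B$, then $A\nabla_{-r}B=(1+r)A-rA=A$, while $A^{-\frac{1}{2}}BA^{-\frac{1}{2}}=I_{\mathcal{H}}$ gives $A\sharp_{-r}B=A^{\frac{1}{2}}I_{\mathcal{H}}^{-r}A^{\frac{1}{2}}=A$, so the two means coincide.

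For the forward implication, I would assume $A\nabla_{-r}B=A\sharp_{-r}B$ and substitute this equality into (\ref{refinement_reverse_AG}). This yields $2r(A\nabla B-A\sharp B)\leq 0$, and since $r>0$, it forces $A\nabla B\leq A\sharp B$ in the operator order. On the other hand, the arithmetic-geometric mean inequality in (\ref{AGH_operator}) gives the reverse relation $A\sharp B\leq A\nabla B$. Combining the two, I obtain the operator equality $A\nabla B=A\sharp B$.

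It then remains to show that $A\nabla B=A\sharp B$ forces $A=B$. I would pass to the congruence $C=A^{-\frac{1}{2}}BA^{-\frac{1}{2}}\in\mathfrak{B}^{+}(\mathcal{H})$, so that $B=A^{\frac{1}{2}}CA^{\frac{1}{2}}$, and write both means in the factored form $A\nabla B=A^{\frac{1}{2}}\big(\frac{I_{\mathcal{H}}+C}{2}\big)A^{\frac{1}{2}}$ and $A\sharp B=A^{\frac{1}{2}}C^{\frac{1}{2}}A^{\frac{1}{2}}$. Cancelling the invertible factor $A^{\frac{1}{2}}$ on both sides reduces the equality to $\frac{I_{\mathcal{H}}+C}{2}=C^{\frac{1}{2}}$, equivalently $(C^{\frac{1}{2}}-I_{\mathcal{H}})^2=0$. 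Since $C^{\frac{1}{2}}-I_{\mathcal{H}}$ is self-adjoint, vanishing of its square forces $C^{\frac{1}{2}}=I_{\mathcal{H}}$, hence $C=I_{\mathcal{H}}$ and $B=A$.

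The main obstacle is this last step, the equality analysis for the operator arithmetic-geometric mean inequality; everything before it is a short deduction from Theorem \ref{tm} and (\ref{AGH_operator}). What makes the final step go through cleanly is the reduction via the congruence by $A^{-\frac{1}{2}}$, which turns a matrix-mean identity into the scalar-looking relation $(C^{\frac{1}{2}}-I_{\mathcal{H}})^2=0$, together with the fact that a self-adjoint operator whose square is zero must itself vanish.
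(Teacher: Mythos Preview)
Your argument is correct and follows exactly the route the paper takes: invoke Theorem~\ref{tm} to reduce $A\nabla_{-r}B=A\sharp_{-r}B$ to $A\nabla B=A\sharp B$, and then use that the latter equality forces $A=B$. The only difference is that the paper cites the equality case of the operator arithmetic--geometric mean inequality as a known fact, whereas you supply its proof via the congruence $C=A^{-1/2}BA^{-1/2}$ and the identity $(C^{1/2}-I_{\mathcal H})^{2}=0$; this addition is correct and self-contained.
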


\begin{proof}
It follows from Theorem \ref{tm} and the fact that $A\nabla
B=A\sharp B$ if and only if $A=B$.
\end{proof}

\begin{remark} Having in mind that scalar inequality (\ref{help4}) holds also for $r\leq -\frac{1}{2}$ (see Lemma \ref{lemma14}), it follows that inequality
(\ref{refinement_reverse_AG}) holds also for $r\leq -\frac{1}{2}$.
However, if $r< -1$, relation (\ref{refinement_reverse_AG}) is
less precise than the original inequality (\ref{reverse_AG}) and
does not represent its refinement. On the other hand, it is
interesting to consider the case when $-1\leq r\leq -\frac{1}{2}$.
Namely, denoting $\nu=-r$, where $\frac{1}{2}\leq \nu\leq 1$,
(\ref{refinement_reverse_AG}) reduces to
$$
A\nabla_{\nu}B-2\nu(A\nabla B-A\sharp B)\leq A\sharp_{\nu}B,
$$
and this relation coincides with the converse of the
arithmetic-geometric mean inequality, that is, with the left
inequality in (\ref{debrecen}).

\end{remark}

\begin{remark}
In \cite{debrecenkit}, the authors considered operator version of
the classical Heinz mean, i.e., the operator
\begin{equation}
\label{Heinz_operator} H_\nu(A,B)=\frac{A\ \! \sharp_\nu \ \! B+
A\ \! \sharp_{1-\nu} \ \! B}{2},
\end{equation}where $A, B \in \mathfrak{B}^{+}(\mathcal{H})$, and $\nu\in [0,1]$.
Like in the real case, this mean interpolates in between
arithmetic and geometric mean, that is,
\begin{equation}
\label{heinz_interpolate} A\ \!\sharp \ \! B\leq H_\nu(A,B)\leq A\
\!\nabla \ \! B.
\end{equation}
On the other hand, since $A, B \in \mathfrak{B}^{+}(\mathcal{H})$,
the expression (\ref{Heinz_operator}) is also well-defined for
$\nu\in \mathbb{R}\setminus[0,1]$. Moreover, due to Theorem
\ref{AG_reverse}, we obtain the inequality
\begin{equation*}
H_{-r}(A,B)=\frac{A\sharp_{-r}B+A\sharp_{1+r}B}{2}\geq\frac{A\nabla_{-r}B+A\nabla_{1+r}B}{2}=A\nabla
B,\ r\geq 0 \ \mathrm{or} \ r\leq-1,
\end{equation*}
complementary to (\ref{heinz_interpolate}).
\end{remark}

In order to conclude this section, we mention yet another
inequality closely connected to the Young inequality. Namely, in
\cite{furuta2}, it has been shown the equivalence between the
Young inequality and the H\"{o}lder-McCarthy inequality which
asserts that
\begin{equation}
\label{holder_mcCarthy} \langle Ax,x \rangle^{-r}\leq \langle
A^{-r}x,x \rangle,\quad x\in \mathcal{H},\ \|x\|=1,
\end{equation}
holds for all $A \in \mathfrak{B}^{+}(\mathcal{H})$ and $r>0$ or
$r<-1$. If $-1<r<0$, then the sign of inequality in
(\ref{holder_mcCarthy}) is reversed.

Now, we give a refinement of the H\"{o}lder-McCarthy, once again
by exploiting Lemma \ref{lemma14}.

\begin{theorem}
Let $A\in {\mathfrak B}^{+}(\mathcal{H})$ and $r>0$. Then the
inequality
\begin{equation}
\label{holder_mccarthy_refinement} 0\leq 2r\left( 1-\langle
A^{\frac{1}{2}}x ,x \rangle \langle Ax,x \rangle^{-\frac{1}{2}}
\right)\leq \langle A^{-r}x,x \rangle \langle Ax,x \rangle^{r}-1
\end{equation}holds for any unit vector $x\in \mathcal{H}$.
\end{theorem}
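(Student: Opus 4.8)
The plan is to prove the two inequalities separately, since they rest on genuinely different facts: the left inequality is a Cauchy--Schwarz (concavity) estimate, while the right inequality comes from the refined scalar Young inequality of Lemma~\ref{lemma14} fed through the functional calculus \eqref{spektar}. Throughout I would abbreviate $\alpha=\langle Ax,x\rangle>0$, $u=\langle A^{\frac12}x,x\rangle$ and $v=\langle A^{-r}x,x\rangle$, so that the claim becomes $0\le 2r(1-u\alpha^{-1/2})\le v\alpha^{r}-1$.

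For the left inequality I would simply note that, since $\|x\|=1$, Cauchy--Schwarz gives $u=\langle A^{\frac12}x,x\rangle\le \|A^{\frac12}x\|\,\|x\|=\langle Ax,x\rangle^{1/2}=\alpha^{1/2}$, using $\|A^{\frac12}x\|^{2}=\langle Ax,x\rangle$. Hence $1-u\alpha^{-1/2}\ge 0$, and multiplying by $2r\ge 0$ yields the bound. Equivalently one may invoke the concave case of the H\"older--McCarthy inequality with exponent $1/2$.

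For the right inequality I would start from the scalar inequality \eqref{help4}, which simplifies to $1+2r-2r\sqrt{t}\le t^{-r}$ for every $t>0$. The key move is to apply it not to $A$ itself but to the rescaled operator $C=A/\alpha$, which is still positive with spectrum in $(0,\infty)$ and, crucially, satisfies $\langle Cx,x\rangle=1$. Applying the monotonicity property \eqref{spektar} to the function $t\mapsto t^{-r}-(1+2r)+2r\sqrt{t}$ gives the operator inequality $(1+2r)I_{\mathcal H}-2rC^{\frac12}\le C^{-r}$; taking the quadratic form at the unit vector $x$ and using $C^{\frac12}=\alpha^{-1/2}A^{\frac12}$ and $C^{-r}=\alpha^{r}A^{-r}$ turns this into $(1+2r)-2r\alpha^{-1/2}u\le \alpha^{r}v$, which is exactly $2r(1-u\alpha^{-1/2})\le v\alpha^{r}-1$ after subtracting $1$.

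The only real subtlety is this normalization: applying \eqref{help4} directly to $A$ produces $(1+2r)-2ru\le v$, which lacks the powers of $\alpha=\langle Ax,x\rangle$ appearing in the statement, so the rescaling $C=A/\langle Ax,x\rangle$ (chosen precisely so that $\langle Cx,x\rangle=1$) is what makes the two sides homogeneous in the right way. I would also stress that the left inequality cannot be recovered from the same scalar inequality, since $2r(1-\sqrt{t})$ is negative for $t>1$; it genuinely requires the separate Cauchy--Schwarz estimate above.
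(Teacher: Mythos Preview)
Your proof is correct and follows essentially the same route as the paper: the paper applies the scalar inequality $2r(1-\sqrt{t})\le t^{-r}-1$ to the rescaled operator $\lambda^{1/r}A$ and then chooses $\lambda=\langle Ax,x\rangle^{-r}$, which is exactly your normalization $C=A/\langle Ax,x\rangle$ written with a free parameter specialized at the end. The left inequality is handled identically via $\langle A^{1/2}x,x\rangle\le\langle Ax,x\rangle^{1/2}$.
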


\begin{proof}
By virtue of (\ref{help4}), it follows that the inequality
$2r(1-\sqrt{x})\leq x^{-r}-1$ holds for all $x>0$. Now, applying
the functional calculus to this inequality and the positive operator
$\lambda^{\frac{1}{r}}A$, $\lambda>0$, we have
$$
2r\left( I_\mathcal{H}
-\lambda^{\frac{1}{2r}}A^{\frac{1}{2}}\right)\leq
\lambda^{-1}A^{-r}-I_\mathcal{H}.
$$
Further, fix a unit vector $x\in \mathcal{H}$. Then we have
$$
2r\left( 1 -\lambda^{\frac{1}{2r}}\langle
A^{\frac{1}{2}}x,x\rangle\right)\leq \lambda^{-1}\langle
A^{-r}x,x\rangle-1.
$$
Finally, putting $\lambda=\langle Ax,x\rangle^{-r}$ in the last
inequality, we obtain second inequality in
(\ref{holder_mccarthy_refinement}). Clearly, the first inequality
sign in (\ref{holder_mccarthy_refinement}) holds due to
(\ref{holder_mcCarthy}) since $\langle A^{\frac{1}{2}}x,x
\rangle\leq \langle Ax,x \rangle^{\frac{1}{2}}$.
\end{proof}

\begin{remark}
Since relation (\ref{help4}) holds for $r\leq -\frac{1}{2}$, it
follows that the second inequality in
(\ref{holder_mccarthy_refinement}) holds also for $r\leq
-\frac{1}{2}$. Clearly, the case of $r<-1$ is not interesting
since in this case we obtain less precise relation than the
original H\"{o}lder-McCarthy inequality (\ref{holder_mcCarthy}).
On the other hand, the case of $-1<r<-\frac{1}{2}$ yields a
converse of (\ref{holder_mcCarthy}).
\end{remark}
\section{Reverse Young-type inequalities for the trace and the determinant}

In this section we derive some Young-type inequalities for the trace
and the determinant of a matrix. The starting point for this
direction is already used Lemma \ref{lemma14}.

In \cite{young2}, Kittaneh and Manasrah obtained the inequality
\begin{equation}\label{kittaneh_trace}
\mathrm{tr} \left|A^{\nu}B^{1-\nu} \right|+
r_0\left(\sqrt{\mathrm{tr} A}-\sqrt{\mathrm{tr} B} \right)^{2}\leq
\mathrm{tr} \left( \nu A+ (1-\nu)B \right),
\end{equation}
which holds for positive semidefinite matrices $A, B\in
\mathcal{M}_n$, $0\leq \nu\leq 1$, and $r_0=\min\{\nu, 1-\nu\}$.

By virtue of Lemma \ref{lemma14}, we can accomplish the inequality
complementary to (\ref{kittaneh_trace}). To do this, we also need
the following inequality regarding singular values of complex
matrices:
\begin{equation}
\label{singular_values} \sum_{j=1}^n s_j(A)s_{n-j+1}(B)\leq
\sum_{j=1}^n s_j(AB)\leq \sum_{j=1}^n s_j(A)s_j(B).
\end{equation}
Now, we have the following result:
\begin{theorem}\label{tmtr}
If $A, B\in \mathcal{P}_n$ and $r\geq 0$, then the following
inequality holds:
\begin{equation}
\label{reverse_trace} \mathrm{tr}((1+r)A-rB)\leq
\mathrm{tr}\left|A^{1+r}B^{-r} \right|-r\left( \sqrt{\mathrm{tr}
A} - \sqrt{\mathrm{tr} B}\right)^{2}.
\end{equation}
\end{theorem}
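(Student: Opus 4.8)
The plan is to reduce \eqref{reverse_trace} to the scalar inequality \eqref{prva_impr} of Lemma \ref{lemma14}, applied eigenvalue by eigenvalue, after bounding the trace norm $\mathrm{tr}\left|A^{1+r}B^{-r}\right|=\sum_{j=1}^n s_j(A^{1+r}B^{-r})$ from below by the left-hand inequality in \eqref{singular_values}. First I would diagonalize: since $A,B\in\mathcal{P}_n$, list their eigenvalues in decreasing order as $\lambda_1\geq\cdots\geq\lambda_n>0$ and $\mu_1\geq\cdots\geq\mu_n>0$, so that $\mathrm{tr}A=\sum_j\lambda_j$, $\mathrm{tr}B=\sum_j\mu_j$, and, because $A^{1+r}$ is again positive definite, $s_j(A^{1+r})=\lambda_j^{1+r}$.

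The crucial bookkeeping step concerns $s_{n-j+1}(B^{-r})$. Since $r\geq0$ and the $\mu_j$ decrease, the eigenvalues $\mu_j^{-r}$ of $B^{-r}$ \emph{increase} in $j$; hence, listed in decreasing order, $s_k(B^{-r})=\mu_{n-k+1}^{-r}$, and therefore $s_{n-j+1}(B^{-r})=\mu_j^{-r}$. Substituting into the left inequality of \eqref{singular_values} with $A^{1+r}$ and $B^{-r}$ in place of $A$ and $B$ gives
\begin{align*}
\mathrm{tr}\left|A^{1+r}B^{-r}\right|=\sum_{j=1}^n s_j(A^{1+r}B^{-r})\geq\sum_{j=1}^n s_j(A^{1+r})\,s_{n-j+1}(B^{-r})=\sum_{j=1}^n\lambda_j^{1+r}\mu_j^{-r}.
\end{align*}

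Next I would apply \eqref{prva_impr} with $a=\lambda_j$ and $b=\mu_j$ (legitimate since $r\geq0$) and sum over $j$; the linear terms collapse to $(1+r)\mathrm{tr}A-r\mathrm{tr}B=\mathrm{tr}((1+r)A-rB)$, yielding
\begin{align*}
\mathrm{tr}((1+r)A-rB)+r\sum_{j=1}^n\left(\sqrt{\lambda_j}-\sqrt{\mu_j}\right)^2\leq\sum_{j=1}^n\lambda_j^{1+r}\mu_j^{-r}\leq\mathrm{tr}\left|A^{1+r}B^{-r}\right|.
\end{align*}
It then remains to check that $\sum_j(\sqrt{\lambda_j}-\sqrt{\mu_j})^2\geq(\sqrt{\mathrm{tr}A}-\sqrt{\mathrm{tr}B})^2$; as $r\geq0$, this lets me replace the summed correction term by the smaller quantity and conclude \eqref{reverse_trace}. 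Expanding both squares, the required bound is equivalent to $\sum_j\sqrt{\lambda_j\mu_j}\leq\sqrt{\left(\sum_j\lambda_j\right)\left(\sum_j\mu_j\right)}$, which is precisely the Cauchy--Schwarz inequality for the vectors $(\sqrt{\lambda_j})_j$ and $(\sqrt{\mu_j})_j$.

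The step I expect to require the most care is the second one: getting the singular-value ordering of $B^{-r}$ right, so that the majorization-type bound in \eqref{singular_values} produces exactly the products $\lambda_j^{1+r}\mu_j^{-r}$ to which the scalar lemma applies. Once that matching is correct, the remainder is a termwise summation of \eqref{prva_impr} followed by a single application of Cauchy--Schwarz.
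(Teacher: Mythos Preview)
Your proof is correct and follows the same three-step strategy as the paper: apply Lemma \ref{lemma14} to pairs of eigenvalues, invoke the lower bound in \eqref{singular_values} to reach $\mathrm{tr}\left|A^{1+r}B^{-r}\right|$, and finish with Cauchy--Schwarz. Your explicit tracking of the singular-value ordering of $B^{-r}$ (so that $s_{n-j+1}(B^{-r})=\mu_j^{-r}$ and hence $\lambda_j^{1+r}\mu_j^{-r}=s_j(A^{1+r})\,s_{n-j+1}(B^{-r})$) is exactly the right bookkeeping---indeed, your pairing $(\lambda_j,\mu_j)$ handles this identification more transparently than the paper's choice of pairing $s_j(A)$ with $s_{n-j+1}(B)$.
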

\begin{proof}

By Lemma \ref{lemma14}, we have
$$
(1+r)s_j(A)-rs_{n-j+1}(B)\leq
s_j^{1+r}(A)s_{n-j+1}^{-r}(B)-r\left(\sqrt{s_j(A)}-\sqrt{s_{n-j+1}(B)}\right)^{2},
$$
for $j=1,2,\ldots, n$.

Now, utilizing the above inequality and (\ref{singular_values}),
as well as the properties of the trace functional, it follows that
\begin{equation*}
\begin{split}
\mathrm{tr}((1+r)A-rB)&=(1+r)\mathrm{tr}A-r \mathrm{tr} B\\
&=\sum_{j=1}^n \left( (1+r)s_j(A)-rs_{n-j+1}(B) \right)\\
&\leq \sum_{j=1}^n
s_j^{1+r}(A)s_{n-j+1}^{-r}(B)\\
&\qquad-r\sum_{j=1}^n\left(
s_j(A)+ s_{n-j+1}(B)-2 \sqrt{s_j(A)s_{n-j+1}(B)}\right)\\
&=\sum_{j=1}^n s_j(A^{1+r})s_{n-j+1}(B^{-r})\\
&\qquad-r\left(\mathrm{tr} A
+ \mathrm{tr} B-2\sum_{j=1}^n\sqrt{s_j(A)s_{n-j+1}(B)} \right)\\
&\leq \sum_{j=1}^n s_j(A^{1+r}B^{-r})-r\left(\mathrm{tr} A +
\mathrm{tr} B-2\sum_{j=1}^n\sqrt{s_j(A)s_{n-j+1}(B)} \right).
\end{split}
\end{equation*}
Moreover, by virtue of the well-known Cauchy-Schwarz inequality,
we have
\begin{equation*}
\begin{split}
\sum_{j=1}^n\sqrt{s_j(A)s_{n-j+1}(B)}&\leq \left(\sum_{j=1}^n
s_j(A) \right)^{\frac{1}{2}}\left(\sum_{j=1}^n
s_{n-j+1}(B) \right)^{\frac{1}{2}}=\sqrt{\mathrm{tr} A \mathrm{tr} B},\\
\end{split}
\end{equation*}
so that
$$
\mathrm{tr}((1+r)A-rB)\leq \mathrm{tr}\left|A^{1+r}B^{-r}
\right|-r\left( \mathrm{tr} A + \mathrm{tr} B-2\sqrt{\mathrm{tr} A
\mathrm{tr} B}\right).
$$
This completes the proof.
\end{proof}

\begin{remark}
Although the proof of Theorem \ref{tmtr} seems to be very
interesting, it can be accomplished in a much simpler way, if we
take into account Theorem \ref{th_unitarily}. Namely, considering
Theorem \ref{th_unitarily} with $X=I_n$ and with the trace norm
$\|\cdot\|_1$, that is, $\|A\|_1=\sum_{i=1}^n s_j(A)=\mathrm{tr}
|A|$, it follows that
$$
(1+r)\|A\|_1-r\|B\|_1+r(\sqrt{\|A\|_1}-\sqrt{\|B\|_1})^2\leq\|A^{1+r}B^{-r}\|_1.
$$
Now, since $A, B\in \mathcal{P}_n$, it follows that
$\|A\|_1=\mathrm{tr} A$ and $\|B\|_1=\mathrm{tr} B$, that is,
$(1+r)\|A\|_1-r\|B\|_1=\mathrm{tr}((1+r)A-rB)$, so we retain the
inequality \eqref{reverse_trace}.
\end{remark}

Our next intention is to obtain an analogous reverse relation for
the determinant of a matrix. In \cite{young2}, the authors
obtained inequality
$$
\mathrm{det} (A^{\nu}B^{1-\nu})+r_0^{n}\mathrm{det} (2A\nabla
B-2A\sharp B)\leq \mathrm{det} (\nu A+(1-\nu)B),
$$
where $0\leq \nu\leq 1$, $r_0=\min\{\nu, 1-\nu\}$, and $A,B$ are
positive definite matrices. The corresponding complementary result
can also be established by virtue of Lemma \ref{lemma14}.

\begin{theorem}\label{tmdet} Let $r\geq 0$ and let $A, B\in \mathcal{P}_n$ be
such that $A\geq \frac{r}{r+1}B$. Then the following inequality
holds:
\begin{equation}
\label{determinat_reverse} \mathrm{det}\left( (1+r)A-rB
\right)\leq \mathrm{det} \left(A^{r+1}B^{-r}
\right)-r^{n}\mathrm{det}\left( 2A\nabla B-2A\sharp B \right).
\end{equation}
\end{theorem}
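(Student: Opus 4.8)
The plan is to reduce the matrix inequality \eqref{determinat_reverse} to a product (eigenvalue) inequality by a congruence transformation, and then to invoke Lemma \ref{lemma14} termwise together with an elementary superadditivity property of products of nonnegative reals. First I would set $X = A^{-\frac12}BA^{-\frac12}$, which is positive definite since $A, B \in \mathcal{P}_n$, and denote its eigenvalues by $x_1, \dots, x_n > 0$. The hypothesis $A \geq \frac{r}{r+1}B$ is equivalent to $X \leq \frac{r+1}{r}I_n$, hence to $x_j \leq \frac{r+1}{r}$ for every $j$, which guarantees $(1+r) - r x_j \geq 0$. Using the factorizations
\begin{align*}
(1+r)A - rB &= A^{\frac12}\bigl((1+r)I_n - rX\bigr)A^{\frac12},\\
2A\nabla B - 2A\sharp B &= A + B - 2A\sharp B = A^{\frac12}\bigl(I_n - X^{\frac12}\bigr)^2 A^{\frac12},
\end{align*}
and the multiplicativity of the determinant (so that $\mathrm{det}(A^{r+1}B^{-r}) = (\mathrm{det}\, A)^{r+1}(\mathrm{det}\, B)^{-r}$ and $\mathrm{det}\, B = \mathrm{det}\, A \cdot \mathrm{det}\, X$), I would write each of the three determinants in \eqref{determinat_reverse} as $\mathrm{det}\, A$ times a product over the $x_j$. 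Setting
$$p_j = (1+r) - r x_j, \qquad q_j = r\bigl(1-\sqrt{x_j}\bigr)^2, \qquad s_j = x_j^{-r},$$
the inequality \eqref{determinat_reverse}, after dividing through by $\mathrm{det}\, A > 0$, becomes exactly $\prod_{j=1}^n p_j + \prod_{j=1}^n q_j \leq \prod_{j=1}^n s_j$.

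Next I would apply Lemma \ref{lemma14} with $a = 1$ and $b = x_j$; this yields precisely the termwise bound $p_j + q_j \leq s_j$ for each $j$, where $p_j \geq 0$ by the hypothesis, $q_j \geq 0$ since $r \geq 0$, and $s_j > 0$. The remaining purely scalar step is the superadditivity of the product: for nonnegative reals one has $\prod_j p_j + \prod_j q_j \leq \prod_j (p_j + q_j)$, which follows immediately by expanding $\prod_j(p_j + q_j)$ into its $2^n$ nonnegative monomials and retaining only the all-$p$ and all-$q$ terms. Combining this with the monotonicity estimate $\prod_j(p_j+q_j) \leq \prod_j s_j$, valid since $0 \leq p_j + q_j \leq s_j$, gives the product inequality, and multiplying back by $\mathrm{det}\, A$ completes the argument.

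The step I expect to be most delicate is not any single computation but rather the bookkeeping of positivity constraints: the superadditivity of products genuinely requires every factor $p_j$ to be nonnegative, and this is exactly the role played by the hypothesis $A \geq \frac{r}{r+1}B$, equivalently $(1+r)A - rB \geq 0$, without which the left-hand determinant would not fit into this chain of inequalities at all. The naive alternative of diagonalizing $A$ and $B$ simultaneously fails because they need not commute; the congruence by $A^{\frac12}$, which replaces the pair $(A, B)$ by $(I_n, X)$ and thereby makes both $(1+r)A - rB$ and $2A\nabla B - 2A\sharp B$ simultaneously diagonal in the eigenbasis of $X$, is the essential device that legitimizes the reduction to the eigenvalues $x_j$.
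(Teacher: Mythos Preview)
Your proof is correct and follows essentially the same route as the paper's: reduce via a congruence to the eigenvalues of a single positive definite matrix, apply Lemma~\ref{lemma14} eigenvalue-by-eigenvalue, and then use the superadditivity inequality $\prod_j p_j + \prod_j q_j \leq \prod_j (p_j+q_j)$ for nonnegative reals. The only cosmetic difference is that you conjugate by $A^{1/2}$ and work with $X=A^{-1/2}BA^{-1/2}$ (applying Lemma~\ref{lemma14} with $a=1$, $b=x_j$), whereas the paper conjugates by $B^{1/2}$ and works with $B^{-1/2}AB^{-1/2}$ (applying the lemma with $a=s_j(B^{-1/2}AB^{-1/2})$, $b=1$); since the eigenvalues of these two matrices are reciprocals of one another, the two reductions are equivalent. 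Your write-up actually makes the superadditivity step and the role of the hypothesis $A\geq \frac{r}{r+1}B$ more explicit than the paper does.
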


\begin{proof} The starting point is Lemma \ref{lemma14} with
$a=s_j\left(B^{-\frac{1}{2}}AB^{-\frac{1}{2}} \right)$ and $b=1$,
i.e. the inequality
$$
s_j^{r+1}\left(B^{-\frac{1}{2}}AB^{-\frac{1}{2}} \right)\geq
(1+r)s_j\left(B^{-\frac{1}{2}}AB^{-\frac{1}{2}}
\right)-r+r\left(s_j^{\frac{1}{2}}\left(B^{-\frac{1}{2}}AB^{-\frac{1}{2}}
\right)-1 \right)^{2}.
$$
Furthermore, since $A\geq \frac{r}{r+1}B$, it follows that
$B^{-\frac{1}{2}}AB^{-\frac{1}{2}}\geq \frac{r}{r+1}I_n$, which
means that $s_j\left(B^{-\frac{1}{2}}AB^{-\frac{1}{2}} \right)\geq
\frac{r}{r+1}$. Consequently, we have that
$$
(1+r)s_j\left(B^{-\frac{1}{2}}AB^{-\frac{1}{2}} \right)-r\geq 0.
$$
Hence, by virtue of the above two relations and the well-known
determinant properties, we have
\begin{equation*}
\begin{split}
\mathrm{det}\left(B^{-\frac{1}{2}}AB^{-\frac{1}{2}}
\right)^{r+1}&=\prod_{j=1}^n
s_j^{r+1}\left(B^{-\frac{1}{2}}AB^{-\frac{1}{2}} \right)\\
&\geq
\prod_{j=1}^n\Bigg[(1+r)s_j\left(B^{-\frac{1}{2}}AB^{-\frac{1}{2}}
\right)-r\\
&\qquad+r\left(s_j^{\frac{1}{2}}\left(B^{-\frac{1}{2}}AB^{-\frac{1}{2}}
\right)-1 \right)^{2}\Bigg]\\
&\geq
\prod_{j=1}^n\left[(1+r)s_j\left(B^{-\frac{1}{2}}AB^{-\frac{1}{2}}
\right)-r\right]\\
&\qquad +
r^{n}\prod_{j=1}^n\left[\left(s_j^{\frac{1}{2}}\left(B^{-\frac{1}{2}}AB^{-\frac{1}{2}}
\right)-1 \right)^{2}\right]\\
&=\mathrm{det}\left((1+r)B^{-\frac{1}{2}}AB^{-\frac{1}{2}}-rI_n
\right)\\
&\qquad+r^{n}\mathrm{det}\left(\left(B^{-\frac{1}{2}}AB^{-\frac{1}{2}}
\right)^{\frac{1}{2}}-I_n \right)^{2}.
\end{split}
\end{equation*}
Finally, multiplying both sides of the obtained inequality by
$\mathrm{det}(B^{\frac{1}{2}} )$ and utilizing the well-known
Binet-Cauchy theorem, we obtain (\ref{determinat_reverse}), as
claimed.
\end{proof}

\section{ Reverses of the Young inequality dealing with singular values }

\noindent Let $x=(x_1,\ldots, x_n), y=(y_1,\ldots,
y_n)\in\mathbb{R}^n$ be such that $0\leq x_1\leq\cdots\leq x_n $
and $0\leq y_1\leq\cdots\leq y_n$. Then $x$ is said to be log
majorized by $y$, and denoted by $x\prec_{\log} y$, if
\begin{align*}
\prod_{j=1}^kx_j\leq \prod_{j=1}^ky_j\qquad(1\leq k< n)\qquad\textrm{and}\qquad \prod_{j=1}^nx_j= \prod_{j=1}^ny_j.
\end{align*}
For $X\in\mathcal{M}_n$ and $k=1, \ldots, n$, the $k$-th compound
of $X$ is defined as the ${{n\choose k}\times{n\choose k}}$
complex matrix $C_k(X)$, whose entries are defined by
$C_k(X)_{r,s}={\rm det} X[(r_1, r_2,\cdots, r_k)|(s_1, s_2,\cdots,
s_k)]$, where $(r_1, r_2,\cdots, r_k),(s_1, s_2,\cdots, s_k)\in
P_{k,n}=\{(x_1,\cdots, x_k)\,\,|\,\, 1\leq x_1 < \cdots< x_k \leq
n\}$ are arranged in a lexicographical order and $(r_1,
r_2,\cdots, r_k)$ and $(s_1, s_2,\cdots, s_k)$ are the $r$-th and
$s$-th element in $P_{k,n}$, respectively. $X[r,s]$ is the
$k\times k$ matrix that contains the elements in the intersection
of rows $(r_1, r_2,\cdots, r_k)\in P_{k,n}$ and columns $(s_1,
s_2,\cdots, s_k)\in P_{k,n}$ (for more details, see
\cite{merris}).
 For example, if $n=3$ and $k=2$, then $(1,2),(1,3)$ and $(2,3)$ are the first, the second and the third element of $P_{k,n}$, respectively.
 So,
\begin{align*}
C_2(X)=\left(\begin{array}{ccc}
 {\rm det} X[1,2|1,2]&{\rm det} X[1,2|1,3]&{\rm det} X[1,2|2,3]\\
 {\rm det} X[1,3|1,2]&{\rm det} X[1,3|1,3]&{\rm det} X[1,3|2,3]\\
 {\rm det} X[2,3|1,2]&{\rm det} X[2,3|1,3]&{\rm det} X[2,3|2,3]
 \end{array}\right).
 \end{align*}
 In a general case, for $A, B\in\mathcal{M}_n$, we have
\begin{align}\label{sese}
 C_k(AB)=C_k(A)C_k(B)\qquad \textrm{and} \qquad s_1(C_k(A))=\prod_{j=1}^ks_j(A)\,\,(1\leq k\leq n).
 \end{align}
Finally we use the corresponding ideas from \cite{lie} to present
our last result.
\begin{theorem}
Suppose that $A, B\in\mathcal{P}_n$ and $ X\in \mathcal{M}_n$. If
$r\geq 0$, then
\begin{itemize}
\item[(i)]$s(A^{1+r}XB^{1+r})\succ_{\log}s^{1+r}(AXB)s^{-r}(X),$
 \item[(ii)]$s(A^{1+r}XB^{-r})\succ_{\log}s^{1+r}(AX)s^{-r}(XB).$
\end{itemize}
\end{theorem}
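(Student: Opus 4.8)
The plan is to reduce both log-majorizations to the single operator-norm inequalities \eqref{edc} and \eqref{23e1}, which already dispose of the case $k=1$, by passing to the $k$-th compound matrices. Recall from \eqref{sese} that $C_k$ is multiplicative and that $s_1(C_k(M))=\prod_{j=1}^k s_j(M)$; hence the products defining log-majorization are precisely the operator norms $\|C_k(\cdot)\|$ of the relevant compounds. The strategy is therefore: for each $k=1,\dots,n$, rewrite $C_k$ of the left-hand product as a product of compounds, apply the $k=1$ inequality to these compounds, and read off the resulting inequality between the products $\prod_{j=1}^k s_j$.

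Concretely, for part (i) I would first record the auxiliary identity $C_k(A^{1+r})=C_k(A)^{1+r}$, valid for $A\in\mathcal{P}_n$ and $r\ge 0$, together with the positive definiteness of $C_k(A)$. This follows from Lemma \ref{shour}: writing $A=U\Lambda U^*$ with $U$ unitary and $\Lambda={\rm diag}(\lambda_1,\dots,\lambda_n)$, $\lambda_j>0$, one checks that $C_k(U)$ is unitary, $C_k(U^*)=C_k(U)^*$, and $C_k(\Lambda^{1+r})=C_k(\Lambda)^{1+r}$, since both are diagonal with entries $\big(\prod_{i\in S}\lambda_i\big)^{1+r}$ over the size-$k$ subsets $S$; multiplicativity then yields $C_k(A^{1+r})=C_k(U)C_k(\Lambda)^{1+r}C_k(U)^*=C_k(A)^{1+r}$, and the positive eigenvalues $\prod_{i\in S}\lambda_i$ show $C_k(A)>0$. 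The same holds for $B$. Consequently $C_k(A^{1+r}XB^{1+r})=C_k(A)^{1+r}\,C_k(X)\,C_k(B)^{1+r}$, and applying \eqref{edc} with $A,B,X$ replaced by the positive definite $C_k(A),C_k(B)$ and by $C_k(X)$ gives
\begin{align*}
\prod_{j=1}^k s_j(A^{1+r}XB^{1+r})&=\|C_k(A^{1+r}XB^{1+r})\|\\
&\ge \|C_k(X)\|^{-r}\,\|C_k(AXB)\|^{1+r}=\prod_{j=1}^k s_j^{1+r}(AXB)\,s_j^{-r}(X),
\end{align*}
where I use $\|C_k(X)\|=\prod_{j=1}^k s_j(X)$ and $\|C_k(AXB)\|=\|C_k(A)C_k(X)C_k(B)\|=\prod_{j=1}^k s_j(AXB)$. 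Part (ii) is entirely parallel: $C_k(A^{1+r}XB^{-r})=C_k(A)^{1+r}C_k(X)C_k(B)^{-r}$, and \eqref{23e1} applied to the compounds yields $\prod_{j=1}^k s_j(A^{1+r}XB^{-r})\ge \prod_{j=1}^k s_j^{1+r}(AX)\,s_j^{-r}(XB)$.

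To complete the argument I would verify the equality at $k=n$ demanded by log-majorization by computing determinants: for (i), $\prod_{j=1}^n s_j(A^{1+r}XB^{1+r})=(\det A)^{1+r}|\det X|(\det B)^{1+r}$, while $\prod_{j=1}^n s_j^{1+r}(AXB)s_j^{-r}(X)=|\det(AXB)|^{1+r}|\det X|^{-r}$, and the two agree because $|\det(AXB)|=\det A\,|\det X|\,\det B$; the check for (ii) is identical. The main obstacle I anticipate is not the chain of inequalities but two bookkeeping points: establishing the fractional-power identity $C_k(A^{1+r})=C_k(A)^{1+r}$ cleanly, so that \eqref{edc} and \eqref{23e1} may legitimately be invoked for the positive definite compounds, and handling the degenerate factors $s_j^{-r}(X)$ when $X$ is singular. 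For the latter I would either assume $X$ invertible and recover the general statement by a continuity argument, or restrict to $k\le \mathrm{rank}(X)$, noting that for larger $k$ both compared products vanish.
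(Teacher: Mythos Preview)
Your proposal is correct and follows essentially the same route as the paper: pass to the $k$-th compound, use multiplicativity of $C_k$ together with $s_1(C_k(\cdot))=\prod_{j=1}^k s_j(\cdot)$, apply \eqref{edc} (resp.\ \eqref{23e1}) to the compounds, and then verify the determinant equality at $k=n$. The only difference is that you spell out the justification of $C_k(A^{1+r})=C_k(A)^{1+r}$ and the handling of singular $X$, points the paper leaves implicit.
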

\begin{proof}
${\rm(i)}$ Let $C_k(X)\in\mathbb{C}_{{n\choose k}\times{n\choose
k}}$ denote the $k$-th component of $X$, $1\leq k\leq n$. Then, we
have
\begin{align*}
\prod_{i=1}^{k}s_{i}(A^{1+r}XB^{1+r})&=s_1(C_k(A^{1+r}XB^{1+r}))\qquad \textrm{by } \eqref{sese}\\&=s_1(C_k(A)^{1+r}C_k(X)C_k(B)^{1+r})\qquad \textrm{by } \eqref{sese}\\&\geq
s_1^{-r}(C_k(X))s_1^{1+r}(C_k(AXB))\qquad \textrm{(by inequality}\, \eqref{edc})\\&=\prod_{i=1}^{k}s_{i}^{-r}(X)\prod_{i=1}^{k}s_{i}^{1+r}(AXB).
\end{align*}
Moreover, if $k=n$, we have
 \begin{align*}
\prod_{i=1}^{n}s_{i}(A^{1+r}XB^{1+r})=|{\rm det}(A^{1+r}XB^{1+r})|=({\rm det}A)^{1+r}|{\rm det}X|({\rm det}B)^{1+r}
\end{align*}
and
\begin{align*}
\prod_{i=1}^{n}s_{i}(X)^{-r}\prod_{i=1}^{n}s_{i}(AXB)^{1+r}=|{\rm det}X^{-r}|\,|{\rm det}(AXB)^{1+r}|=({\rm det}A)^{1+r}|{\rm det}X|({\rm det}B)^{1+r}.
\end{align*}
${\rm(ii)}$ The second conclusion can be accomplished by a similar
argument as in  ${\rm(i)}$ and by utilizing the inequality
\eqref{23e1}.
\end{proof}

\bibliographystyle{amsplain}

\end{document}